\documentclass{article}
\usepackage{graphicx} 
\usepackage{amssymb}
\usepackage{amsthm}
\usepackage{amsmath}
\usepackage{mathtools}
\usepackage{bbm}

\usepackage{xcolor}

\newcommand{\br}{\mathbb{R}}
\newcommand{\bn}{\mathbb{N}}
\newcommand{\bz}{\mathbb{Z}}

\newcommand{\chfun}{\mathbbm{1}}

\newcommand{\cb}{\mathcal{B}}
\newcommand{\cf}{\mathcal{F}}
\newcommand{\cg}{\mathcal{G}}
\newcommand{\cs}{\mathcal{S}}
\newcommand{\cp}{\mathcal{P}}

\DeclarePairedDelimiter{\norm}{\lVert}{\rVert}

\newcommand{\supp}{{\operatorname{supp}}}

\newcommand{\FIN}{\mathrm{FIN}}

\newcommand{\EXH}{\mathrm{EXH}}

\newtheorem{theorem}{Theorem}
\newtheorem{remark}{Remark}

\newtheorem{lemma}{Lemma}
\newtheorem{proposition}{Proposition}
\newtheorem{example}{Example}
\newtheorem{definition}{Definition}

\title{Compactness in Combinatorial Banach Spaces and Beyond}
\author{Jacek Gulgowski\thanks{University of Warsaw,
Center of Trustworthy AI for Life Sciences - International Research Agendas Programme, Warsaw, Poland;
e-mail: \texttt{j.gulgowski@uw.edu.pl}
ORCID: \texttt{0000-0002-7706-9263}},  Paweł Sawicki\thanks{University of Gda\'nsk,
Doctoral School at the Faculty of Mathematics, Physics and Informatics, Gda\'nsk, Poland;
e-mail: \texttt{p.a.sawicki@wp.pl}
ORCID: \texttt{0009-0008-4954-0362}}}
\date{September 2026}

\begin{document}

\maketitle

\begin{abstract}
In this paper we are going to discuss compactness in combinatorial Banach spaces. After discussing the definition and structure of these spaces, we proceed to propose compactness criteria for them and for their extensions. Finally, we demonstrate their connection with theory of hypergraphs.
\end{abstract}

\noindent\textbf{Keywords:} combinatorial Banach spaces, compactness, equinormed sets, hypergraphs

\medskip

\noindent\textbf{2020 Mathematics Subject Classification:}
Primary 46B50; Secondary 46B45.

\section{Introduction}

The term {\em combinatorial Banach space} was introduced by Gowers on his blog entry \cite{Gowers:blog}, but the topic was investigated by many Authors before and after this publication. One can find a very good summary of different ideas and the state of the research in Sebastian Jachimek's Ph.D. dissertation \cite{JachimekPhD}.

The precise definition of the combinatorial Banach space will be given further, but before we dive into details let us have a look on the general idea of the paper. The story starts from  a cover of $\bn$ by a family $\cf$ of finite and nonempty sets and the space (which we later denote as $\FIN(\cf)$) and all sequences $x=(x_n)_{n\in\bn}$ such that
\[
\Vert x\Vert_\cf = \sup_{F \in \cf} \sum_{n \in F} |x_n| < \infty.
\]

The combinatorial Banach spaces we are investigating in the present paper are actually the subspaces of $\FIN(\cf)$ consisting of all sequences $x\in\FIN(\cf)$ such that 
\[
\lim_{m\to\infty}\Vert P_m(x)\Vert_\cf = 0,
\]
where $P_m:\FIN(\cf)\to\FIN(\cf)$ is a linear projection to the subspaces consisting of sequences $(x_n)$ where $x_n=0$ for all $n\leq m$. This subspace will be later denoted by $\EXH(\cf)$. The precise definitions and explanations of the notation will be given later.

As we can see (cf. Theorem \ref{th:cominatorial:submeasure} below) the family $\cf$ induces a nonpathological l.s.c. submeasure $\phi$ on $\bn$
so the combinatorial Banach space $\EXH(\cf)$ (and related space $\FIN(\cf)$) are actually the special cases of the more general concept of sequence spaces generated by a submeasure, as introduced in a paper \cite{FBN}.

We can look at families $\cf$ as lying between two extremes: $\cf_\infty = \{ \{n\} : n\in\bn\}$ (consisting of all singletones) and $\cf_1 = \{ C\subset\bn : |C|<\infty\}$  (consisting of all finite subsets of $\bn$) which generate the spaces $\ell^\infty$ and $\ell^1$ respectively. As we can easily observe
\[
\ell^1 \subset \FIN(\cf) \subset \ell^\infty,
\]
and
\[
\ell^1 \subset \EXH(\cf) \subset c_0,
\]
for any family $\cf$ (and the injections are continuous).

In the paper \cite{borodulin2024zoo} Authors investigated the zoo of combinatorial Banach spaces from the perspective of the relation between the combinatorics of the family $\cf$ of finite sets covering the set of natural numbers $\bn$ and the geometric properties of Banach spaces $\EXH(\cf)$ (denoted there as $X_\cf$) generated by the family $\cf$. This is the clear inspiration of the present paper where we are going to find the description of the compact sets of $\EXH(\cf)$ and of $\FIN(\cf)$. To make things a little bit simpler we will actually give criteria for a set to be {\em relatively compact}: let us remind that the set is {\em relatively compact} if its closure is a compact set.

We are going to show that the criteria is a certain combination of known criteria for classical sequence spaces (see classical Dunford and Schwartz monography \cite[Theorem~IV.5.6, p.~260 and Theorem~IV.6.5, p.~266]{DS}, we also refer the reader to \cite{GKM} where some of them are proved by means of the technique which we are going to use in the present paper).

\begin{theorem}[{\cite[Theorem 4.1]{GKM}}]\label{th:l1}The bounded set $A\subset \ell^1$ is relatively compact if and only if 
\[
\forall_{\varepsilon>0} \exists_{N\in\bn} \forall_{x\in A} \sum_{n=N+1}^{\infty} |x_n|<\varepsilon. 
\]    
\end{theorem}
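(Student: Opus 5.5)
We prove the two directions separately, using that a subset of a complete metric space is relatively compact if and only if it is totally bounded. Throughout, $P_N$ denotes the projection killing the first $N$ coordinates, so $\sum_{n=N+1}^\infty |x_n| = \norm{P_N x}_1$, and $I-P_N$ is the projection onto the first $N$ coordinates.

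\emph{Necessity.} Assume $A$ is relatively compact and fix $\varepsilon>0$. By total boundedness we choose a finite $\varepsilon/2$-net $y^{(1)},\dots,y^{(k)}\in\ell^1$ for $A$. Each $y^{(i)}$ lies in $\ell^1$, so its tail tends to zero. Hence there is a single $N$ with $\norm{P_N y^{(i)}}_1<\varepsilon/2$ for all $i\le k$; this is possible because only finitely many points are involved. For $x\in A$ we pick $i$ with $\norm{x-y^{(i)}}_1<\varepsilon/2$. Since $\norm{P_N}\le 1$,
\[
\norm{P_N x}_1 \le \norm{P_N(x-y^{(i)})}_1 + \norm{P_N y^{(i)}}_1 < \varepsilon .
\]

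\emph{Sufficiency.} Assume $A$ is bounded, say $\norm{x}_1\le M$ for $x\in A$, and satisfies the uniform tail condition. Fix $\varepsilon>0$ and take $N$ with $\norm{P_N x}_1<\varepsilon/2$ for all $x\in A$. The set $(I-P_N)(A)$ is a bounded subset of the $N$-dimensional space spanned by the first $N$ unit vectors, since its norm is bounded by $M$. By Heine--Borel it is totally bounded, so it admits a finite $\varepsilon/2$-net $z^{(1)},\dots,z^{(k)}$. For $x\in A$ choose $i$ with $\norm{(I-P_N)x - z^{(i)}}_1<\varepsilon/2$. Then
\[
\norm{x-z^{(i)}}_1 \le \norm{(I-P_N)x-z^{(i)}}_1+\norm{P_N x}_1<\varepsilon .
\]
Thus $A$ is totally bounded, and since $\ell^1$ is complete, its closure is compact.

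Both directions are routine. The only points requiring care are that the tail bound in the necessity direction is uniform, which follows from the finiteness of the net, and that boundedness is used exactly once, to make the finite-dimensional part totally bounded.
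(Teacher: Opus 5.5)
Your proof is correct. You use boundedness exactly where it is needed: the tail condition alone does not control the first $N$ coordinates, as $\{k\chfun_{\{1\}} : k\in\bn\}$ shows.

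The paper does not prove this statement itself. It quotes it from \cite{GKM} and says such criteria come from the seminorm technique of Section~\ref{sec:abstract:compactness}. In that route one equips $\ell^1$ with the seminorms $\norm{x}_N=\sum_{n=1}^N|x_n|$. The defect $\norm{x}_1-\norm{x}_N=\sum_{n>N}|x_n|$ is exactly the tail, so the condition in the statement says precisely that $A$ is equinormed. Since the tail is subadditive, $A-A$ is then equinormed too. Theorem~\ref{thm:compactness_ver2}, with (A4) supplied by the diagonal argument, then gives precompactness. Necessity follows from the general fact that precompact sets are equinormed. Within the paper the statement is also the special case $\cf=\cf_{<\infty}$ of Theorem~\ref{th:compact:exh}, because $\ell^1=\EXH(\cf_{<\infty})$ and $\norm{a\cdot\chfun_{[n+1,+\infty)}}_{\cf_{<\infty}}=\sum_{k>n}|a_k|$.

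Your direct total-boundedness proof is more elementary and self-contained. Finite-dimensional compactness enters once, via Heine--Borel at a single level $N$, rather than through a diagonal subsequence that is Cauchy for all seminorms at once. Necessity is a two-line net argument. The abstract route buys uniformity: the same scheme covers $c_0$, $\EXH(\cf)$ and $\FIN(\cf)$. It also isolates why in $\ell^1$ the criterion can be phrased in terms of $A$ rather than $A-A$, which is the theme of the paper's later section on equinormed sets.
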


\begin{theorem}\label{th:c0}The bounded set $A\subset c_0$ is relatively compact if and only if 
\[
\forall_{\varepsilon>0} \exists_{N\in\bn} \forall_{x\in A} |x_n|<\varepsilon. 
\]    
\end{theorem}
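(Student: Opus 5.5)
The statement as written has a typo: the quantifier over $n$ is missing. The intended condition is
\[
\forall_{\varepsilon>0}\ \exists_{N\in\bn}\ \forall_{x\in A}\ \forall_{n>N}\ |x_n|<\varepsilon ,
\]
equivalently $\sup_{x\in A}\sup_{n>N}|x_n|<\varepsilon$. I will prove Theorem \ref{th:c0} in this form, mirroring the approach of Theorem \ref{th:l1}. The idea is to use the characterization of relatively compact subsets of a complete metric space as totally bounded sets, together with the projections $P_N$ that zero out the first $N$ coordinates. The $c_0$-norm splits as
\[
\|x\|_\infty=\max\bigl(\|x-P_N x\|_\infty,\ \|P_N x\|_\infty\bigr),
\]
where $x-P_Nx$ lives in the finite-dimensional space $\br^N$ and $\|P_N x\|_\infty=\sup_{n>N}|x_n|$.

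\emph{Sufficiency.} Fix $\varepsilon>0$ and choose $N$ from the condition, so that $\|P_N x\|_\infty\le\varepsilon$ for every $x\in A$. The set $\{x-P_Nx : x\in A\}$ is a bounded subset of the $N$-dimensional space spanned by the first $N$ unit vectors, so it is totally bounded. Hence it has a finite $\varepsilon$-net $y^1,\dots,y^k$, and we may take these in $c_0$ since they are finitely supported. For $x\in A$, pick $y^j$ with $\|x-P_Nx-y^j\|_\infty<\varepsilon$. Then
\[
\|x-y^j\|_\infty\le \|x-P_Nx-y^j\|_\infty+\|P_Nx\|_\infty<2\varepsilon .
\]
So $A$ is totally bounded, and since $c_0$ is complete, $A$ is relatively compact.

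\emph{Necessity.} This is the only step needing care: the uniformity in $x$ must come from compactness. Let $A$ be relatively compact and fix $\varepsilon>0$. Total boundedness gives a finite $\varepsilon/2$-net $z^1,\dots,z^k\in c_0$ for $A$. Each $z^j\in c_0$, so there is $N_j$ with $|z^j_n|<\varepsilon/2$ for all $n>N_j$. Put $N=\max_j N_j$. For $x\in A$ choose $j$ with $\|x-z^j\|_\infty<\varepsilon/2$. Then for every $n>N$,
\[
|x_n|\le|x_n-z^j_n|+|z^j_n|<\varepsilon .
\]
This is exactly the condition. Boundedness of $A$ is automatic here, but it is assumed anyway.
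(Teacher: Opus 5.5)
Your proof is correct, and your reading of the missing quantifier is the intended one: $\forall_{n>N}$, i.e.\ $\sup_{x\in A}\sup_{n>N}|x_n|<\varepsilon$, which has the same shape as condition (\ref{th:compact:exh}.2). You also use boundedness exactly where it is indispensable, since $\{m\chfun_{\{1\}} : m\in\bn\}$ satisfies the tail condition without being relatively compact.

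The paper gives no separate proof of Theorem \ref{th:c0}. It quotes the result as classical, and within the paper it is recovered as the case $\cf=\cf_{\leq 1}$ of Theorem \ref{th:compact:exh}, because $c_0=\EXH(\cf_{\leq 1})$. That route differs from yours in one of two ways:
\begin{itemize}
\item it embeds $\EXH(\cf)$ isometrically into $BC(K)$ via $\Phi(a)(\xi)=\sum_n a_n\xi_n$ and checks the criterion of Theorem \ref{compact:bc}, namely that the functions $\Phi(a)$, $a\in A$, are uniformly almost constant on the cells of a finite open partition of $K$ (the cylinders fixing the first $n$ coordinates);
\item or it shows that $A-A$ is equinormed and invokes Theorem \ref{thm:compactness_ver2} together with Lemma \ref{lem:exh:compact:cond}.
\end{itemize}

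Your argument is a direct total-boundedness proof. It rests on the splitting $x=(x-P_Nx)+P_Nx$, where $x-P_Nx$ ranges over a bounded subset of a finite-dimensional space. This is more elementary and self-contained, and it carries over verbatim to an arbitrary $\EXH(\cf)$. Replace $\Vert\cdot\Vert_\infty$ by $\Vert\cdot\Vert_\cf$, and use $\Vert P_n y\Vert_\cf\le\Vert y\Vert_\cf$ together with the finite dimensionality of the range of $I-P_n$. This reproves Theorem \ref{th:compact:exh} without $\Phi$, $K$, or the abstract criteria.

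What the paper's approach buys is a single framework (function-space representation plus equinormedness) that the authors reuse for $\FIN(\cf)$. There no tail criterion can characterize compactness: a single point of $\FIN(\cf)\setminus\EXH(\cf)$ is compact but has non-vanishing tails.
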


\begin{theorem}[{\cite[Theorem 4 and Proposition 11]{GKM2}}]\label{th:old:linf}The bounded set $A\subset \ell^\infty$ is relatively compact if and only if for any $\varepsilon>0$ there exists finite family of sets $U_i\subset\bn$, $i=1,...,N$, such that $\bn = \bigcup_{i=1}^N U_i$ and
\[
\forall_{x,y\in A} \forall_{i=1,...,N} \sup_{n\in U_i}|x_n-y_n|<\varepsilon. 
\]    
\end{theorem}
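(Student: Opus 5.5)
As literally written, the displayed condition cannot be the intended one. Since the sets $U_i$ cover $\bn$, the requirement $\sup_{n\in U_i}|x_n-y_n|<\varepsilon$ for every $i$ and every $x,y\in A$ gives $\norm{x-y}_\infty\le\varepsilon$ for every $\varepsilon>0$. That would force $A$ to be a single point, and then already a two-point set $A$ would contradict the equivalence. I therefore read the criterion (as in \cite{GKM2}) as the uniform small-oscillation condition
\[
\forall_{x\in A}\ \forall_{i=1,\dots,N}\ \sup_{n,m\in U_i}|x_n-x_m|<\varepsilon ,
\]
and plan to prove the theorem in that form. The whole argument rests on the fact that in the complete space $\ell^\infty$, relative compactness is the same as total boundedness. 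So each direction reduces to comparing finite $\varepsilon$-nets with finite covers of $\bn$.

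\emph{Sufficiency.} Let $M=\sup_{x\in A}\norm{x}_\infty$ and fix $\varepsilon>0$. Take the cover $U_1,\dots,U_N$ given by the condition; without loss of generality every $U_i$ is nonempty. Choose representatives $n_i\in U_i$ and consider the map $T:A\to\br^N$, $T(x)=(x_{n_1},\dots,x_{n_N})$. Its image lies in the cube $[-M,M]^N$, so it is totally bounded. Pick $x^1,\dots,x^k\in A$ whose images form a finite $\varepsilon$-net of $T(A)$ in the max-norm. For $x\in A$, choose $j$ with $|x_{n_i}-x^j_{n_i}|<\varepsilon$ for all $i$. For any $n\in\bn$, pick $i$ with $n\in U_i$; the triangle inequality through $n_i$ gives
\[
|x_n-x^j_n|\le|x_n-x_{n_i}|+|x_{n_i}-x^j_{n_i}|+|x^j_{n_i}-x^j_n|<3\varepsilon .
\]
Hence $A$ has a finite $3\varepsilon$-net for every $\varepsilon$, and is therefore relatively compact.

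\emph{Necessity.} Suppose $A$ is relatively compact, hence totally bounded and bounded by some $M$. Fix $\varepsilon>0$ and take an $\varepsilon/3$-net $y^1,\dots,y^k\in\ell^\infty$ for $A$, with all $\norm{y^l}_\infty\le M+1$. Split $[-M-1,M+1]$ into finitely many half-open intervals $J_1,\dots,J_r$, each of length less than $\varepsilon/3$. To each tuple $(j_1,\dots,j_k)$ attach the set
\[
U_{(j_1,\dots,j_k)}=\{n\in\bn : y^l_n\in J_{j_l}\ \text{for } l=1,\dots,k\}.
\]
There are at most $r^k$ such sets, and they cover $\bn$. Now let $n,m$ lie in one of these sets and let $x\in A$. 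Choose $l$ with $\norm{x-y^l}_\infty<\varepsilon/3$. Since $y^l_n$ and $y^l_m$ lie in the same interval, $|y^l_n-y^l_m|<\varepsilon/3$, and so
\[
|x_n-x_m|\le|x_n-y^l_n|+|y^l_n-y^l_m|+|y^l_m-x_m|<\varepsilon .
\]

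The main obstacle is the necessity direction. There the cover must work \emph{uniformly} for all $x\in A$, and this is achieved by refining simultaneously along all finitely many net elements. Sufficiency is a routine finite-dimensional reduction.
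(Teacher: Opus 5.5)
Your reading of the statement is correct, and the correction is needed. Because $\bigcup_i U_i=\bn$, the printed condition says $\norm{x-y}_\infty<\varepsilon$ for all $x,y\in A$ and every $\varepsilon>0$. It therefore holds only when $A$ has at most one point, and the ``only if'' direction already fails for a two-point set.

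The oscillation condition you adopt, $\sup_{n,m\in U_i}|x_n-x_m|<\varepsilon$ for every $x\in A$ and every $i$, is evidently the intended one. It is the classical Dunford--Schwartz criterion for $B(S)$ with $S=\bn$ \cite{DS}, and it is exactly the form the paper itself uses for $B(K)$ in Theorem~\ref{th:compact:b}.

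Your proof of this version is correct. Two cosmetic repairs are needed in the necessity part. First, the bound $\norm{y^l}_\infty\le M+1$ is not automatic for an arbitrary $\varepsilon/3$-net in $\ell^\infty$; choose the net inside $A$, or simply partition $[-R,R]$ with $R=\max_l\norm{y^l}_\infty$. Second, the last of the intervals $J_j$ must be closed so that they actually cover the whole interval.

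The paper contains no proof of this theorem; it is quoted from \cite{GKM2}, so there is no in-paper argument to match. Your route is the classical direct one. For sufficiency you reduce to finite dimensions through one representative point per piece. For necessity you take the common refinement of the level sets of finitely many net elements. Throughout you use only that relative compactness equals total boundedness in the complete space $\ell^\infty$.

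The semi-norm framework of Section~\ref{sec:abstract:compactness} would fit the rest of the paper better. One would take the countable family $\max_{n\in F}|x_n|$, $F\subset\bn$ finite, and apply Theorem~\ref{thm:compactness_ver2}, which characterizes compactness by equinormedness of $A-A$. But turning that into a finite cover of $\bn$ with uniformly small oscillation still requires an argument like your level-set refinement. So for $\ell^\infty$ your self-contained proof is the more economical one.
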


Below we are going to state the compactness criteria for the spaces $\EXH(\cf)$ and $\FIN(\cf)$ partially in the spirit of the above mentioned classic theorems. Some details of the discussion on compactness will actually lead us to the interesting combinatorial questions and references.

\section{Basic concepts}

Let us start with basic details related to the notation: $\bn=\{1,2,...\}$ denotes the set of all natural numbers and 
$\bn_0 = \bn\cup \{0\}$. 
The number of elements of the set $C$ will be denoted by $|C|$. The characteristic function of the set $A$ will be denoted by $\chfun_A$. The power set of the set $X$ will be denoted by $\mathcal{P}(X)$.

For a sequence $(a_n)_{n\in\bn}$ we will denote its {\em support} (i.e. the set of natural numbers $n\in\bn$ for which $a_n\neq 0$) by
\[
\supp(a_n)=\{ n\in\bn : a_n\neq 0 \}.
\]

In the present paper the only intervals we consider are either $[-1,1]\subset\br$ or subsets of $\bn$. So anytime we write the interval $[i,j]$ where $i,j\in\bn$ we mean
\[
[i,j] = \{i,\ldots,j\} \subset \bn.
\]
Similarly we will denote $[i,+\infty)=\{i,i+1,\cdots\}\subset\bn$ and $(i,j)=\{i+1,\ldots,j-1\}\subset\bn$.

\subsection{Function spaces}

For a topological space $X$ (not necessarily compact) by $C(X)$ we will denote the space of continuous functions $f\colon X\to\br$. This space will be usually endowed with pointwise convergence topology. On the other hand by $BC(X)$ we are going to denote the space of functions $f\colon X\to\br$ which are continuous and bounded. Of course when $X$ is compact we have $BC(X)=C(X)$. Also the space $BC(X)$ will usually be endowed with uniform convergence topology. Obviously the space $BC(X)$ may also be endowed with a supremum norm, then we will denote it as $BC_\infty(X)$.

The supremum norm in any space $E$ of bounded functions or sequences will later be denoted by $\norm{x}_\infty$ for $x\in E$. This refers to the space of bounded sequences $\ell^\infty$ and the space $BC(X)$ of bounded and continuous functions $f\colon X\to\br$.

\subsection{Abstract compactness criteria}\label{sec:abstract:compactness}

In this paper we are going to refer to the technique introduced in papers \cite{GKM, GKM2}. Let us assume that the Banach space $(E,\Vert\cdot\Vert)$ is equipped with a family of seminorms $(\Vert\cdot\Vert_i)_{i\in I}$ indexed by some set $I$ satisfying the following assumptions:
\begin{enumerate}
 \item[\textbf{\textup{(A1)}}] $\norm{x}_E=\sup_{i \in I}\norm{x}_i$ for $x \in E$,
 \item[\textbf{\textup{(A2)}}] for every $i,j \in I$ there is an index $k \in I$ such that $\norm{x}_i \leq \norm{x}_k$ and $\norm{x}_j \leq \norm{x}_k$ for $x \in E$; in other words, the family $\bigl\{\norm{\cdot}_i\bigr\}_{i \in I}$ forms a directed set. 
\end{enumerate}

The following definition is used in the characterization of compact subsets of space $E$
\begin{definition}[{\cite[Definition 3.6]{GKM}}]\label{def:equinormed}
Let $(E,\norm{\cdot}_E)$ be a normed space equipped with a family $\bigl\{\norm{\cdot}_i\bigr\}_{i \in I}$ of semi-norms satisfying conditions~\textbf{\textup{(A1)}} and~\textbf{\textup{(A2)}}.
A non-empty subset $A$ of $E$ is called \emph{equinormed} (or more precisely, \emph{equinormed with respect to the family $\bigl\{\norm{\cdot}_i\bigr\}_{i \in I}$}) if for every $\varepsilon>0$ there exists an index $k \in I$ such that $\norm{x}_E\leq \varepsilon + \norm{x}_k$ for all $x \in A$.
\end{definition}

We have the following theorem
\begin{theorem}[{\cite[Theorem 3.19]{GKM}}]\label{thm:compactness_ver2}
Let $(E,\norm{\cdot}_E)$ be a normed space equipped with a family $\bigl\{\norm{\cdot}_i\bigr\}_{i \in I}$ of semi-norms satisfying conditions~\textbf{\textup{(A1)}} and~\textbf{\textup{(A2)}}. Furthermore, assume that 
\begin{enumerate}
 \item[\textbf{\textup{(A4)}}] for each bounded sequence $(x_n)_{n \in \mathbb N}$ of elements of $E$ there is a subsequence $(x_{n_k})_{k \in \mathbb N}$ which is Cauchy with respect to each semi-norm $\norm{\cdot}_i$.\footnote{We keep the numbering of conditions the same is in \cite{GKM}}
\end{enumerate}
Then, a non-empty subset $A$ of $E$ is precompact if and only if it is bounded and the set $A-A$ is equinormed.
\end{theorem}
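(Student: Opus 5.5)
We prove the two implications separately, using only (A1), (A2), (A4) and completeness-free arguments; precompact here means totally bounded.

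\emph{Necessity.} Let $A$ be precompact. Then $A$ is bounded, and $A-A$ is precompact as the image of the totally bounded set $A\times A$ under the uniformly continuous map $(x,y)\mapsto x-y$. So it suffices to show that every totally bounded set $B$ is equinormed. Fix $\varepsilon>0$ and choose a finite $\varepsilon/3$-net $b_1,\dots,b_m$ for $B$. By (A1), for each $j$ there is $i_j\in I$ with $\norm{b_j}_E\le \varepsilon/3+\norm{b_j}_{i_j}$. Applying (A2) repeatedly gives one index $k$ dominating all the $i_j$, so $\norm{b_j}_E\le\varepsilon/3+\norm{b_j}_k$ for every $j$. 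For $x\in B$ pick $b_j$ with $\norm{x-b_j}_E<\varepsilon/3$. Since every seminorm is bounded by $\norm{\cdot}_E$ by (A1), we get
\[
\norm{x}_E\le \norm{b_j}_E+\varepsilon/3\le \norm{b_j}_k+2\varepsilon/3\le \norm{x}_k+\varepsilon .
\]

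\emph{Sufficiency.} Assume $A$ is bounded and $A-A$ is equinormed. We show that every sequence $(x_n)\subset A$ has a $\norm{\cdot}_E$-Cauchy subsequence, which is equivalent to total boundedness. By (A4), pass to a subsequence $(x_{n_l})$ that is Cauchy in every seminorm $\norm{\cdot}_i$. Given $\varepsilon>0$, equinormedness of $A-A$ gives $k$ with
\[
\norm{x_{n_l}-x_{n_p}}_E\le \varepsilon+\norm{x_{n_l}-x_{n_p}}_k .
\]
For $l,p$ large the last term is below $\varepsilon$, so the subsequence is $\norm{\cdot}_E$-Cauchy.

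The only delicate point is the necessity direction. There, (A2) is needed to pass from finitely many indices to a single one. We also need the inequality $\norm{\cdot}_k\le\norm{\cdot}_E$, which follows from (A1); it lets the seminorm estimate transfer along the $\varepsilon/3$-approximation. Everything else is routine.
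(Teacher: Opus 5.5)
Your proof is correct: reading precompact as totally bounded, the necessity direction (finite $\varepsilon/3$-net, (A2) to merge finitely many indices, and $\norm{\cdot}_k\le\norm{\cdot}_E$ from (A1)) and the sufficiency direction (an (A4)-subsequence made $\norm{\cdot}_E$-Cauchy via equinormedness of $A-A$) are both complete. The paper gives no proof of its own and simply quotes the result from \cite{GKM}; your argument is the standard route to it, with your necessity step matching the fact the paper later recalls from \cite[Proposition 3.13]{GKM} that precompact sets are equinormed.
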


\begin{remark}
For a sequence space where all seminorms depend on finitely many terms in the sequence (i.e. each seminorm depends on finitely dimensional subspace only) and there are countably many seminorms, the condition $\textbf{\textup{(A4)}}$ is automatically satisfied via the diagonal argument.    
\end{remark}

\subsection{Families of of finite subsets covering $\bn$}

We will be working with families $\cf$ consisting of finite subsets $F\subset\bn$ and such that
$\bn=\bigcup_{F\in \cf} F$. In our investigations we will refer to two extreme (in a sense) cases very often:
\[
\cf_{\leq 1} = \{ F\subset \bn : |F|=1 \}
\]
and
\[
\cf_{<\infty}=\{ F\subset\bn : |F|<\infty \}.
\]
The first one is (in a way) the smallest one, the second one is the biggest one.

\begin{definition}
    The family $\cf$ is called {\em hereditary} if for any $F\in\cf$ and $G\subset F$ there is $G\in\cf$.
\end{definition}

As we can see each hereditary family $\cf$ satisfies
\[
\cf_{\leq 1} \subset \cf \subset \cf_{<\infty}.
\]

For any family $\cf$ (not necessarily hereditary) we may consider its {\em hereditary closure} i.e. the family $\cb$ consisting of all subsets of sets from $\cf$. This closure is surely a hereditary family and it is the smallest hereditary family consisting all sets from $\cf$.

Let us also have a closer look to another important property of considered families i.e. the {\em compactness} of the family $\cf$. Before we proceed further, let us observe that we may look at the family $\cf$ as the set of characteristic functions $\chfun_F\colon\bn\to\{0,1\}$. Of course the correspondence between subsets $F\subset\bn$ and characteristic functions $\chfun_F$ is naturally bijective. 

\begin{definition}
    We call the family $\cf$ {\em compact} if the set of characteristic functions $\{\chfun_F : F\in\cf\}\subset C(\bn)$ is compact in the pointwise convergence topology.
\end{definition}

One can easily see that out of the two extreme families mentioned above the family $\cf_{\leq 1}$ is compact while $\cf_{<\infty}$ is not.

The next Lemma is an easy consequence of the definition and is actually a folclore:
\begin{lemma}
The family $\cf$ is compact if and only if the closure of the set $\{\chfun_F : F\in\cf\}$ in $C(\bn)$ with pointise convergence topology consists of characteristic functions of finite sets only.
\end{lemma}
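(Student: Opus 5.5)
The plan is to turn compactness of $\cf$ into a statement about closedness in the Cantor cube. Since $\bn$ is discrete, $C(\bn)=\br^{\bn}$ with the product topology. The set $K=\{\chfun_F : F\in\cf\}$ lies in $\{0,1\}^{\bn}$, which is compact by Tychonoff and closed in $\br^{\bn}$. Hence $\overline{K}\subset\{0,1\}^{\bn}$ is compact and consists only of characteristic functions $\chfun_G$, $G\subset\bn$. Since $\br^{\bn}$ is Hausdorff, $K$ is compact if and only if $K=\overline{K}$.

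The concrete tool is the convergence criterion in this topology. $\chfun_{F_\alpha}\to\chfun_G$ pointwise if and only if for every $m$ we eventually have $F_\alpha\cap[1,m]=G\cap[1,m]$. Because the cylinder sets $\{\chfun_H : H\cap[1,m]=G\cap[1,m]\}$ form a neighbourhood base, this also gives: $\chfun_G\in\overline{K}$ if and only if for every $m$ there is $F\in\cf$ with $F\cap[1,m]=G\cap[1,m]$.

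($\Rightarrow$) If $\cf$ is compact, then $\overline{K}=K$, so every element of the closure is $\chfun_F$ with $F\in\cf$. Such an $F$ is finite, which gives this direction.

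($\Leftarrow$) Assume every $\chfun_G\in\overline{K}$ has $G$ finite. Take such a $G$ and set $m=\max G$ (or $m=1$ if $G=\emptyset$). The criterion gives $F\in\cf$ with $F\cap[1,m]=G$, so $G\subset F$. To conclude $G\in\cf$, and hence $\overline{K}=K$, we need $\cf$ to be hereditary. This is the main obstacle, and I do not expect it can be removed. Consider the covering family
\[
\cf=\bigl\{\{1,n\}: n\ge 2\bigr\}\cup\bigl\{\{n\} : n\ge 2\bigr\}.
\]
Here $\chfun_{\{1,n\}}\to\chfun_{\{1\}}$, so $\overline{K}=K\cup\{\chfun_{\{1\}}\}$ contains only finite sets. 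Yet $\{1\}\notin\cf$, so $K$ is not closed and therefore not compact. So I will prove the lemma for hereditary families $\cf$, where the argument above closes the gap.

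For general families, the wording of the lemma matches the convention in which compactness means that $\overline{K}$ contains no infinite sets; under that reading the equivalence holds by definition. I would add a remark to this effect and point out that every family considered later, in particular every hereditary closure $\cb$, falls under the hereditary case.
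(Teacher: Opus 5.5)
The paper gives no proof of this lemma; it calls it an easy folklore consequence of the definition. So there is no route to compare against. Your analysis is correct, and more careful than the paper's statement.

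Your reduction to closedness of $\{\chfun_F : F\in\cf\}$ inside the compact set $\{0,1\}^{\bn}$ is sound, as is the cylinder-set description of the closure. The direction ($\Rightarrow$) holds for arbitrary $\cf$. Your example shows that ($\Leftarrow$) genuinely fails under the paper's literal definition of compactness. So restricting to hereditary $\cf$ is the right repair, not a gap in your argument.

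One slip: in your example the closure is your $K$ together with both $\chfun_{\{1\}}$ and $\chfun_{\emptyset}$, since $\chfun_{\{n\}}\to\chfun_{\emptyset}$. The conclusion is unaffected. In fact the family of all singletons is already a counterexample. This also shows that the paper's remark that $\cf_{\leq 1}$ is compact holds only if $\cf_{\leq 1}$ is read as $\{F\subset\bn : |F|\leq 1\}$, i.e.\ with $\emptyset$ included.

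Your case $G=\emptyset$ uses $\emptyset\in\cf$. The paper's definition of hereditary does force this, but it conflicts with the introduction's convention that members of $\cf$ are nonempty. Suppose heredity is only required for nonempty subsets. Then every hereditary cover of $\bn$ contains all singletons, so $\chfun_{\emptyset}$ lies in the closure but not in the set itself. Hence no such family is compact, and the lemma fails again (e.g.\ for the family of singletons). It is worth stating explicitly that $\emptyset\in\cf$ is part of the hypothesis.

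Finally, the paper only invokes this lemma for hereditary $\cf$, in the next lemma on compactness of the space of finitely supported $\xi$ with $\supp(\xi)\in\cf$. Its second half uses exactly the direction ``not compact implies some $\chfun_G$ with $G$ infinite lies in the closure''. So your restricted version is all that is needed there. The alternative convention in your last paragraph is not the definition the paper gives, so either drop it or flag it as a different definition.
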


\subsection{Submeasure sequence spaces and combinatorial Banach spaces}

First we are going to remind concepts of sequence spaces generated by a submaesure defined on $\bn$ and introduced by Borodulin\--Nadzieja and Farkas in \cite{FBN}. Let us frst remind that a function  $\phi:\mathcal{P}(\bn)\to[0,\infty]$ is called a \emph{submeasure} if $\phi(\emptyset)=0$, $\phi(\{n\})<\infty$ for every $n\in\bn$, and 
\[
\phi(C)\leq\phi(C\cup D)\leq\phi(C)+\phi(D)
\]
for all $C,D\subseteq\bn$. A submeasure $\phi$ is \emph{lower semicontinuous} (lsc) if $\phi(C)=\lim_{n\to\infty}\phi(C\cap\{1,2,\ldots,n\})$ for each $C\subseteq\bn$.

An lsc submeasure $\phi$ is \emph{non-pathological}, if
$$\phi(C)=\sup\{\mu(C):\ \mu\text{ is a measure such that }\mu\leq\phi\},$$
for all $C\subseteq\bn$. Not every lsc submeasure is non-pathological -- see \cite[Section~1.9]{farah-book}, \cite[Theorem~4.12]{ft-mazur}, \cite[Section~6.2]{marciszewski-sobota}, \cite{MezaPat} or \cite[Theorem~4.7]{tryba-gendensity} for such examples. 

Given an lsc submeasure $\phi$, let $\mathcal{M}_\phi$ be the family of all measures $\mu$ on $\bn$ such that $\mu\leq\phi$. Then, by definition, $\phi$ is non-pathological if and only if $\phi(C)=\sup\{\mu(C): \mu\in\mathcal{M}_\phi\}$ for all $C\subseteq\bn$.

For lsc submeasure $\phi$ we may naturally define two sequence spaces (cf. \cite{FBN}).

\begin{definition}
Let $\phi$ be a non-pathological lsc submeasure. Define a function $\hat\phi:\br^\bn\to[0,\infty]$ by:
\[
\hat\phi(x)=\sup\left\{\sum_{n\in\bn}\mu(\{n\})|x_n|: \mu\in\mathcal{M}_\phi\right\}
\]
for all $x=(x_n)_{n\in\bn}\in\br^\bn$. Define also:
\[
\FIN(\phi)=\left\{x\in\br^\bn: \hat\phi(x)<\infty\right\};
\]
\[
\EXH(\phi)=\left\{x\in\br^\bn: \lim_{n\to\infty}\hat\phi(x\cdot\chi_{[n+1, +\infty)})=0\right\} =\left\{x\in\br^\bn: \lim_{n\to\infty}\hat\phi(P_n(x))=0\right\}.
\]
\end{definition}

Let us now move to the definition of {\em combinatorial Banach spaces}.

\begin{definition}
Letting $\mathcal{F}$ be a family of finite sets covering $\bn$, it is possible to define a norm on $\br^\bn$ as follows: $$\Vert x\Vert_\mathcal{F}=\sup_{F \in \mathcal{F}} \sum_{n \in F} |x_n|.$$ Then the space $$X_\mathcal{F}=\{x \in \br^\bn: \, \lim_{n \to \infty} \Vert x \cdot \chfun_{[n+1, +\infty)}\Vert_\mathcal{F}=0\}.$$ is called a \emph{combinatorial Banach space}.
\end{definition}

It is a quick observation that if we have two families $\cf,\cg$ of finite sets covering $\bn$ and such that their hereditary closure is the same then  $X_\cf=X_\cg$.

\begin{theorem}\label{th:cominatorial:submeasure}
    Combinatorial Banach spaces are a subclass of submeasure sequence spaces.
\end{theorem}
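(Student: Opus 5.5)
Given a family $\cf$ of finite sets covering $\bn$, I will define the submeasure
\[
\phi(C)=\sup_{F\in\cf}|C\cap F|,\qquad C\subseteq\bn,
\]
and prove three things: $\phi$ is an lsc submeasure, $\phi$ is non-pathological, and $\hat\phi(x)=\Vert x\Vert_\cf$ for every $x\in\br^\bn$. Once these hold, $\EXH(\phi)=X_\cf$ follows directly by comparing the definitions, since $\hat\phi(P_n(x))=\Vert x\cdot\chfun_{[n+1,+\infty)}\Vert_\cf$. The same comparison gives $\FIN(\phi)=\FIN(\cf)$.

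\textbf{Step 1: $\phi$ is an lsc submeasure.} Clearly $\phi(\emptyset)=0$. Since $\cf$ covers $\bn$, we get $\phi(\{n\})=1<\infty$. Monotonicity and subadditivity hold because $|(C\cup D)\cap F|\le|C\cap F|+|D\cap F|$ for each $F$, and taking suprema preserves both inequalities. For lower semicontinuity, fix $C$ and any $F\in\cf$. Since $F$ is finite, $F\subseteq[1,n]$ for large $n$, so $|C\cap F|=|C\cap[1,n]\cap F|\le\phi(C\cap[1,n])$. Taking the supremum over $F$ gives $\phi(C)\le\lim_n\phi(C\cap[1,n])$, and the reverse inequality is monotonicity.

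\textbf{Step 2: non-pathology and identification of $\hat\phi$.} For each $F\in\cf$ consider the counting measure $\mu_F(C)=|C\cap F|$. By definition $\mu_F\le\phi$, so $\mu_F\in\mathcal{M}_\phi$. Since $\sup_F\mu_F(C)=\phi(C)$, $\phi$ is non-pathological. For $\hat\phi$, the measures $\mu_F$ give
\[
\hat\phi(x)\ \ge\ \sup_F\sum_{n\in F}|x_n|=\Vert x\Vert_\cf .
\]

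\textbf{Step 3: the reverse inequality $\hat\phi(x)\le\Vert x\Vert_\cf$.} This is the main obstacle, because an arbitrary $\mu\in\mathcal{M}_\phi$ need not be a combination of the $\mu_F$. Fix $\mu\in\mathcal{M}_\phi$. By lower semicontinuity and monotone convergence it suffices to bound $\sum_{n\le N}\mu(\{n\})|x_n|$ for each fixed $N$; the sum over infinitely many $n$ is then handled by these same two facts. Write $a_n=\mu(\{n\})$ and restrict attention to $[1,N]$.

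Let $\cb_N$ be the family of all sets $F\cap[1,N]$ with $F\in\cf$; it is finite. The constraint on $\mu$ reads
\[
\sum_{n\in C}a_n\le\max_{B\in\cb_N}|C\cap B|\quad\text{for all } C\subseteq[1,N].
\]
The goal is $\sum_{n\le N} a_n y_n\le\max_{B\in\cb_N}\sum_{n\in B}y_n$ for $y_n=|x_n|\ge 0$. I will use the layer-cake decomposition. Order the values of $y$ and write $y=\sum_k t_k\chfun_{C_k}$ with $t_k\ge0$ and nested level sets $C_1\supseteq C_2\supseteq\cdots$. Then
\[
\sum_n a_ny_n=\sum_k t_k\mu(C_k)\le\sum_k t_k\max_B|C_k\cap B|.
\]
The difficulty is that the maximizing $B$ may differ for different $k$, and then the last sum need not be bounded by $\max_B\sum_{n\in B}y_n$.

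I see two ways forward and would try the first one first.

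\emph{Option (a): weaken the target.} The theorem only asserts that $X_\cf$ is a submeasure space, i.e.\ equality of the sets $X_\cf$ and $\EXH(\phi)$. So it suffices to show $\hat\phi(x)\le c\,\Vert x\Vert_\cf$ for some constant $c$, which would give equivalent norms. If no uniform constant emerges, the fallback is to show at least that $\hat\phi(x)$ is finite, and $\hat\phi(P_n x)\to0$, exactly when the corresponding $\Vert\cdot\Vert_\cf$ quantities are.

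\emph{Option (b): change the submeasure.} Replace $\phi$ by $\psi(C)=\sup\{\mu(C):\mu\in\operatorname{conv}\{\mu_F\}\}$, which still equals $\phi$. Then check directly, via a minimax or LP-duality argument on the finite set $[1,N]$, that every $\mu\le\phi$ is dominated by a convex combination of the $\mu_F$ in the sense needed to control $\sum_n a_ny_n$. A convex combination of the $\mu_F$ satisfies $\sum_n a_ny_n\le\max_F\sum_{n\in F}y_n$ by linearity, so this would give the sharp bound $\hat\phi(x)\le\Vert x\Vert_\cf$.
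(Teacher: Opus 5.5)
\textbf{Step 3 is a genuine gap, and it cannot be closed by either of your options, because the inequality is false in general.} Your Steps 1 and 2 are correct and are exactly the paper's argument: the paper uses the same $\phi=\Phi_\cf$ and the same $\mu_F$, then declares $\hat\phi(x)=\Vert x\Vert_\cf$ ``obvious''. You were right to single out the reverse inequality as the crux. But $\hat\phi\le c\Vert\cdot\Vert_\cf$ fails in general, and so does $X_\cf=\EXH(\phi)$.

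\emph{Counterexample.} Split $\bn$ into consecutive blocks $L_1,L_2,\dots$ with $|L_j|=2^j$. Set $w(n)=2^{-j}$ for $n\in L_j$, and let $\cf$ be the family of all finite nonempty $F$ with $\sum_{n\in F}w(n)\le 1$. This family is hereditary and covers $\bn$.
\begin{itemize}
\item Take a finite $C$, list its elements by increasing weight, and let $T$ be the longest initial segment with $w(T)\le 1$.
\item Suppose $T\ne C$ and the first omitted element lies in $L_j$. Then $w(T)>1-2^{-j}$, and each element of $T$ weighs at most $2^{-j}$, so $|T|\ge 2^j$. Also $C\setminus T\subseteq L_1\cup\dots\cup L_j$, which has fewer than $2^{j+1}$ elements.
\item Hence $\phi(C)\ge |T|>|C|/3$ for every finite $C$. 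By lower semicontinuity, $\mu=\frac13\cdot(\text{counting measure})$ belongs to $\mathcal M_\phi$.
\item So $\hat\phi(x)\ge\frac13\Vert x\Vert_{\ell^1}$, which gives $\EXH(\phi)=\FIN(\phi)=\ell^1$.
\end{itemize}
Now let $x_n=2^{-j}/j$ for $n\in L_j$. For every $F\in\cf$,
\[
\sum_{n\in F,\ n>m}|x_n|\le\frac{1}{j(m+1)}\sum_{n\in F}w(n)\le\frac{1}{j(m+1)}\longrightarrow 0,
\]
where $j(m+1)$ is the index of the block containing $m+1$. Thus $x\in X_\cf$. However $\Vert x\Vert_{\ell^1}=\sum_j 1/j=\infty$, so $x\notin\EXH(\phi)$.

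\textbf{Consequences for your routes.}
\begin{itemize}
\item Option (a) fails: there is no constant $c$. The fallback also fails, since $\Vert P_m x\Vert_\cf\to 0$ while $\hat\phi(x)=\infty$.
\item Option (b) is vacuous. Since $\psi=\phi$ as set functions, $\mathcal M_\psi=\mathcal M_\phi$ and $\hat\psi=\hat\phi$.
\item The domination you hope to get from LP duality is false already on a finite $[1,N]$. For $y=\sum_{j\le k}2^{-j}\chfun_{L_j}$, the measure $\frac13\cdot(\text{counting measure})$ gives $\sum_n\mu(\{n\})y_n=k/3$. Every convex combination of the $\mu_F$ gives at most $\Vert y\Vert_\cf=1$.
\item In LP terms, the polytope $\{a\ge 0: a(C)\le\phi(C)\ \forall C\}$ is generally strictly larger than the solid convex hull of $\{\chfun_F:F\in\cf\}$. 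They coincide when the hereditary closure is a matroid (Edmonds), but not for arbitrary families.
\end{itemize}

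\textbf{No other submeasure repairs this.} For non-pathological $\psi$ one has $\hat\psi(\chfun_C)=\psi(C)$.
\begin{itemize}
\item $\hat\psi=\Vert\cdot\Vert_\cf$ forces $\psi=\phi$.
\item $\hat\psi\ge c\Vert\cdot\Vert_\cf$ forces $\psi\ge c\phi\ge\frac c3\cdot(\text{counting measure})$, so again $\EXH(\psi)\subseteq\ell^1$, which does not contain $x$.
\end{itemize}

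What Steps 1--2 actually establish is $\Vert\cdot\Vert_\cf\le\hat\phi$, hence $\EXH(\phi)\subseteq X_\cf$. The identity $\hat\phi=\Vert\cdot\Vert_\cf$ would be trivial if $\hat\phi$ were a supremum over the $\mu_F$ alone (or their convex hull). The paper's definition, however, takes the supremum over all of $\mathcal M_\phi$. So the gap you flagged is real, cannot be filled, and is shared by the paper's proof.
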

\begin{proof}
Let us define a submeasure $\Phi_\mathcal{F}$ as $\Phi_\mathcal{F}(C)=\sup_{F \in \mathcal{F}}|C\cap F|$. It is trivial to check that this is indeed a submeasure and that it is lsc.
Moreover, for any $F\in\cf$ we have a measure $\mu_F$ defined on $\mathcal{P}(\bn)$
by $\mu_F(C) = |C\cap F|$. It is easy to observe that $\mu_F(C)\leq\Phi_\cf(C)$ for all $C\subset\bn$ and $F\in\cf$ and that
\[
\Phi_{\cf}(C) = \sup_{F\in\cf}\mu_F(C).
\]
This shows that $\Phi_\cf$ is nonpathological. Moreover, it is obvoius that for any sequence $x\colon\bn\to\br$ there is $\hat\Phi_\cf(x) = \Vert x\Vert_\cf$. That's why  $X_\cf=\EXH(\Phi_\cf)$
\end{proof}

\begin{remark}
Since now we will denote the combinatorial Banach space $X_\cf$ by $\EXH(\cf)$ (instead of $\EXH(\Phi_\cf)$. Consequently we will write $\FIN(\cf)$ instead of $\FIN(\Phi_\cf)$.    
\end{remark}

It is proven in \cite[Theorem 5.4]{FBN} that, for any submeasure $\Phi$, $EXH(\Phi)=FIN(\Phi)$ if and only if $FIN(\Phi)$ is separable. The spaces $X_\cf$  generated by a family $\cf$ of finite subsets of $\bn$ covering $\bn$ is also mentioned in Section 5 in \cite{FBN}, without mentioning a term {\em combinatorial Banach space}.

\section{Function space representation of combinatorial Banach spaces}\label{sec:function:space}


In this section we are going to show that combinatorial Banach spaces may be represented as subspaces of space $C(K)$ of continuous functions on certain topological space $K$. The space $K$ is compact if and only if the family $\cf$ generating the combinatorial Banach space is compact. The idea for a definition of the space $K$ mentioned here was taken from the paper \cite{KKLPB}.

First we are going to construct a compact space $\tilde K$ being the product of countably many finite sets $D_n$, $n\in\bn$ (each of them with discrete topology) with a product topology (i.e. with a pointwise convergence topology). To make the construction more readable we are going to follow  paper \cite{KKLPB} mentioned above but turn into less general setting (instead of working with general Banach space ${\mathbf Z}$ we assume the standard situation i.e. ${\mathbf Z}=\ell^1$). Hence, let us take $D_n=\{k\cdot 4^{-n}: k=-4^{n},...,4^{n}, k\in\bz\}\subset[-1,1]$ and denote $\tilde K = \prod_{n\in\bn} D_n$. 

Let us now define the subspace $K\subset \tilde K$ given by
\begin{equation}\label{def:space:K}
    K=\left\{ \xi = (\xi_n)_{n\in\bn} \in \tilde K\cap c_{00} :\supp(\xi) \in\cf \right\}.
\end{equation}

Please note that, by definition of $\tilde K$, we know that if $\xi\in K$, then $\xi\in\ell^\infty$ and $\norm{\xi}_\infty\leq 1$.

\begin{remark}
    The topological spaces  $K_\cf$ and $K_\cg$ constructed for different families $\cf,\cg$ of finite sets covering $\bn$ are formally different, even when $\cf$ and $\cg$ have the same hereditary closure. 
\end{remark}

The following Lemma slightly extends the observation stated in the proof of (1) in Lemma 3.3. in \cite{KKLPB}.
\begin{lemma}
Assume $\cf$ is hereditary. Then the set $K$ is compact if and only if the family $\cf$ is compact.    
\end{lemma}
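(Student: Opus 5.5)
Since $\tilde K=\prod_{n\in\bn}D_n$ is a countable product of finite discrete spaces, it is compact and metrizable (Tychonoff). So $K$ is compact if and only if it is closed in $\tilde K$. Equivalently, a limit (in the product topology) of any sequence $\xi^{(m)}\in K$ must again lie in $K$. Both implications will be reduced to this sequential statement, with the characteristic-function description of compactness of $\cf$ (the folklore Lemma above) as the bridge.

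For ``$\cf$ compact $\Rightarrow$ $K$ compact'', take $\xi^{(m)}\in K$ converging coordinatewise to some $\xi\in\tilde K$, and put $F_m=\supp(\xi^{(m)})\in\cf$. Since $\{\chfun_F:F\in\cf\}$ is compact in the pointwise topology (which is metrizable on $\{0,1\}^\bn$), we may pass to a subsequence with $\chfun_{F_m}\to\chfun_F$ for some $F\in\cf$. Each $D_n$ is discrete, so for every $n$ the values $\xi^{(m)}_n$ are eventually equal to $\xi_n$. Hence $\xi_n\neq 0$ forces $n\in F_m$ for all large $m$, i.e. $n\in F$. Thus $\supp(\xi)\subset F$, so $\supp(\xi)$ is finite, $\xi\in c_{00}$, and by heredity $\supp(\xi)\in\cf$. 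So $\xi\in K$ and $K$ is closed, hence compact.

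For the converse, assume $K$ is compact and let $\chfun_{F_m}\to g$ pointwise with $F_m\in\cf$; then $g=\chfun_G$ for some $G\subset\bn$. Consider $\xi^{(m)}=\chfun_{F_m}$, which lies in $K$ because $1\in D_n$ for every $n$ and $\supp(\xi^{(m)})=F_m\in\cf$. These converge in $\tilde K$ to $\chfun_G$. Compactness of $K$ in the Hausdorff space $\tilde K$ gives closedness, so $\chfun_G\in K$. Hence $G$ is finite and $G\in\cf$. By the folklore Lemma, or directly since the closure of $\{\chfun_F:F\in\cf\}$ then lies in the set itself, $\cf$ is compact.

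The only delicate point is the first direction. There we need closure under coordinatewise limits to produce a \emph{subset} of a member of $\cf$, rather than the support itself being a limit of supports, because coordinates may vanish in the limit. Heredity is used exactly there. We also need that sequences suffice, which follows from metrizability of $\tilde K$ and of $\{0,1\}^\bn$.
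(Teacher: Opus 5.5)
Your proof is correct and follows essentially the paper's route: $K$ is closed in the compact metrizable $\tilde K$, and for the converse you view $\chfun_{F_m}$ as elements of $K$. The one difference is in the first direction, where you apply sequential compactness of $\cf$ to the supports and use heredity on the limit, whereas the paper applies heredity to the truncations $\supp(\xi_n)\cap[1,k]$.

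One correction to your closing remark: $0$ is isolated in each $D_n$, so coordinates cannot vanish in the limit. Hence $\chfun_{\supp(\xi^{(m)})}\to\chfun_{\supp(\xi)}$ and in fact $\supp(\xi)=F$, so heredity is not needed in either direction.
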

\begin{proof}
The proof of compactness of $K$ for compact $\cf$ is given in the mentioned paper \cite{KKLPB} but we will state it here for completeness (in much simpler version adjusted to the present context). And we will start with a proof in this direction: assume that $\cf$ is compact and take a sequence $(\xi_n)\subset K$ convergent to some $\eta\in\tilde K$. We are going to show that $\eta\in K$. Let us fix any $k\in\bn$ and take $\delta < 4^{-k}$. We know that for $n\in\bn$ big enough we have for all $j\in[1,k]$
\[
|\eta_j-\xi_{n,j}|<\delta
\]
what, taking into account that either  $\xi_{n,j}=0$ (when $j\not\in\supp(\xi_n)$) or $|\xi_n,j|>\delta$ (when $j\in\supp(\xi_n)$), means that $\eta_j=0$ iff $\xi_{n,j}=0$ for $n$ big enough and for all $j\in\bn$. Hence $\supp(\eta)\cap [1,k] =\supp(\xi_n)\cap[1,k]$. But we know that $\supp(\xi_n)\in\cf$ so, because $\cf$ is hereditary, also $\supp(\xi_n)\cap[1,k]\in\cf$. This means that for each $k\in\bn$ the set $\supp(\eta)\cap[1,k]\in\cf$. It is easy to observe that the characteristic function $\chfun_{\supp(\eta)}$ is a pointwise limit of the sequence of characteristic functions $\chfun_{\supp(\eta)\cap[1,n]}$, meaning that $\supp(\eta)$ belongs to the closure of the family $\cf$. Since $\cf$ is compact the set $\supp(\eta)$ must be finite. This means that 
$\eta\in K$, and thus $K$ is closed and compact as a subset of a compact space $\tilde K$.

Now let us assume $\cf$ is not compact, so there exists such $G\subset\bn$ that $|G|=\infty$ and $\chfun_G(k) = \lim_{n\to+\infty}\chfun_{F_n}(k)$ for all $k\in\bn$ and for a certain sequence $(F_n)_{n\in\bn}\subset\cf$. Then the sequences $\chfun_{F_n}\in K$, the sequence $\chfun_G$ is a pointwise limit of $(\chfun_{F_n})$ in $\tilde K$ and  $\chfun_G\not\in K$. Hence, $K$ is not compact. 
\end{proof}

\begin{lemma}\label{exh:fun:representation}
Assume now we have a family $\cf$ of finite sets covering $\bn$ (we neither assume that it is compact nor even hereditary).
Then there exists a subspace of a linear space $BC(K)$ of bounded and continuous functions $f\colon K\to\br$ with pointwise convergence topology, which is isometric to the space $\EXH(\cf)$.
\end{lemma}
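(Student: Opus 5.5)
The natural plan is to realize each $x\in\EXH(\cf)$ as a function on $K$ by pairing it with the points of $K$. For $x\in\EXH(\cf)$ and $\xi\in K$ put
\[
T(x)(\xi)=\sum_{n\in\bn}\xi_n x_n=\sum_{n\in\supp(\xi)}\xi_n x_n .
\]
The sum is finite, since $\supp(\xi)\in\cf$ is finite. We want to show that $T$ is a linear isometry from $\EXH(\cf)$ onto a subspace of $BC(K)$ equipped with the supremum norm. The topology on $K$ is the one inherited from $\tilde K$, i.e. pointwise convergence. Linearity of $T$ is immediate.

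\textbf{Boundedness and the norm.} Since $|\xi_n|\le 1$ and $\supp(\xi)\in\cf$,
\[
|T(x)(\xi)|\le \sum_{n\in\supp(\xi)}|x_n|\le \norm{x}_\cf .
\]
For the reverse inequality, fix $F\in\cf$ and take $\xi_n=\operatorname{sgn}(x_n)$ for $n\in F$ with $x_n\neq 0$. On the rest of $F$ we need nonzero entries so that the support is exactly $F$; take, say, $\xi_n=4^{-n}$ there, and $\xi_n=0$ outside $F$. Then $\xi\in\tilde K\cap c_{00}$ and $\supp(\xi)=F\in\cf$, so $\xi\in K$. We get
\[
T(x)(\xi)\ge \sum_{n\in F}|x_n| - \sum_{n\in F}4^{-n}|x_n| .
\]
This small error is the one spot needing care. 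One way around it is to replace $4^{-n}$ by $\pm 4^{-m}$ with $m$ large and the sign chosen to match $x_n$; this keeps the support equal to $F$ and makes the error term nonnegative. Either way, taking the supremum over $\xi$ (and over $F$) gives $\norm{T(x)}_\infty=\norm{x}_\cf$. So $T$ is an isometry onto its image.

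\textbf{Continuity of $T(x)$.} This is the main obstacle, because $K$ need not be compact and points of $K$ may have supports that are unbounded across a net. Let $\xi^\alpha\to\xi$ pointwise in $K$ and fix $\varepsilon>0$. Since $x\in\EXH(\cf)$, choose $m$ with $\norm{P_m(x)}_\cf<\varepsilon$. Split
\[
T(x)(\eta)=\sum_{n\le m}\eta_n x_n+\sum_{n>m}\eta_n x_n .
\]
The first part depends on finitely many coordinates, so it is continuous in $\eta$. The tail is bounded by $\sum_{n\in\supp(\eta),\,n>m}|x_n|\le \norm{P_m(x)}_\cf<\varepsilon$ for every $\eta\in K$. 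Hence
\[
\limsup_\alpha|T(x)(\xi^\alpha)-T(x)(\xi)|\le 2\varepsilon .
\]
So $T(x)$ is a uniform limit of continuous functions on $K$, and therefore continuous. This is exactly where the hypothesis $x\in\EXH(\cf)$, rather than only $x\in\FIN(\cf)$, is used. No compactness or hereditarity of $\cf$ is needed.

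Thus $T(\EXH(\cf))$ is a linear subspace of $BC(K)$ isometric to $\EXH(\cf)$ with respect to the supremum norm, which proves Lemma~\ref{exh:fun:representation}.
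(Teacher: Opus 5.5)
Your proof is correct and is essentially the paper's argument: the same pairing map and upper bound $|T(x)(\xi)|\le\norm{x}_\cf$, a sign-pattern point supported on a nearly norming $F\in\cf$ for the reverse inequality, and continuity via the truncations $x\chfun_{[1,m]}$ plus the uniform tail bound $\norm{P_m(x)}_\cf<\varepsilon$ (the paper phrases this as extension by density from $c_{00}$, and states the resulting continuity less explicitly than you do). Your insistence that the test point have support exactly $F$ repairs a spot the paper glosses over, since its $\eta$ vanishes on $F\setminus\supp(a)$ and so need not lie in $K$ when $\cf$ is not hereditary; the only blemish is that the ``error term'' you worry about does not exist --- the filler entries sit exactly where $x_n=0$, so $T(x)(\xi)=\sum_{n\in F}|x_n|$ exactly (and you may simply put $\xi_n=1$ there) --- while your suggested repair with $\pm4^{-m}$, $m$ large, should be dropped, as $4^{-m}\notin D_n$ for $m>n$.
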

\begin{proof}
    
We will prove this Lemma in a series of steps:

\begin{enumerate}
\item 
Let us first define the map $\Phi\colon c_{00}\to BC(K)$ by
\[
\Phi(a)(\xi) = \sum_{n\in\bn} a_n\xi_n
\]
for $a\in c_{00}$ and $\xi\in K$.  Since $\supp(\xi)$ is finite for any $\xi\in K$, the sum is well defined for any sequence $a\in\br^\bn$. We need to prove that $\Phi$ indeed falls into $BC(K)$.

First, we will show that for any $a\in c_{00}$ the function $\Phi(a):K\to\br$ is continuous. Let us take any sequence $\xi_n\to\xi$ in $K$. Since $\xi\in K$, we know that  $\supp(\xi)\subset [1,k]$ for some $k\in\bn$ and $\supp(\xi)\in\cf$. We also know that $\supp(a)\subset[1,j]$ for some $j\in\bn$. We can also observe that for $n\in\bn$ big enough the sequence $\xi_n$ is constant on the interval $[1,\max\{j,k\}]$ meaning that
\[
\lim_{n\to+\infty} \Phi(a)(\xi_n) = \lim_{n\to+\infty} \sum_{i=1}^{\max\{j,k\}} a_i\xi_{n,i} = \sum_{i=1}^{\max\{j,k\}} a_i\xi_{i} = \Phi(a)(\xi).
\]

We should also observe that $\Phi(a)\colon K\to\br$ is a bounded function. Let us take any $a\in c_{00}$ and any $\xi\in K$. Then 
\[
|\Phi(a)(\xi)| = \left| \sum_{i\in\bn} a_i\xi_i \right| \leq \sum_{i\in\supp(\xi)} |a_i|\cdot|\xi_i|  \leq \norm{a}_\cf,
\]
because $\supp(\xi)\in\cf$ and $\norm{\xi}_\infty\leq 1$.
As we can see, this estimate works also for any $a\in \FIN(\cf)$ (so also for any $a\in\EXH(\cf)$).

This means we have a map $\Phi:c_{00}\to BC(K)$.
\item The map $\Phi:c_{00}\to BC(K)$ is obviously linear, now we are going to show that it is continuous when $c_{00}$ is endowed with a norm $\norm{\cdot}_\cf$. Let us take such $a\in c_{00}$ that $\norm{a}_\cf\leq 1$. Then 
\[
\norm{\Phi(a)}_{\infty} = \sup_{\xi\in K} |\Phi(a)(\xi)|.
\]

Let us fix $\xi\in K$ and estimate
\[
|\Phi(a)(\xi)| = \left| \sum_{i\in\supp(\xi)} a_i\xi_i \right| \leq
\left(\sup_{i\in\supp(\xi)}|\xi_i|\right) \cdot \left( \sum_{i\in\supp(\xi)} |a_i| \right) \leq \Vert a\Vert_\cf
\]
since $|\xi_i|\leq 1$ for all $i\in\bn$ and $\supp(\xi)\in\cf$. This means that $\Phi:c_{00}\to BC(K)$ is linear and continuous with a norm $\Vert \Phi\Vert\leq 1$.

\item We can see that $c_{00}$ is dense in $\EXH(\cf)$ so for each $a\in\EXH(\cf)$ there exists a sequence $a_n \in c_{00}$ such that $\norm{a_n-a}_\cf\to 0$. Moreover, for any fixed $\xi\in K$ we have $\supp(\xi)\in\cf$ being a finite set, what means that
\[
\Phi(a)(\xi) = \lim_{n\to +\infty} \Phi(a_n)(\xi)
\]
is well defined. Moreover we may take $a_n$ in such a way that $\norm{a_n}_\cf \leq \norm{a}_\cf$ (e.g. by taking $a_n=a\cdot \chfun_{[1,n]}$). Then we can see that for $n$ big enough we have $\supp(\xi)\subset[1,n]$, $a_k=a_{n,k}$ for $k\in\supp(\xi)$ and consequently 
\[
\left|\Phi(a)(\xi)\right| = \left| \sum_{i\in\supp(\xi)} a_i\xi_i \right| =  \left| \sum_{i\in\supp(\xi)} a_{n,i}\xi_i \right| \leq \norm{a_n}_\cf \leq \norm{a}_\cf.
\]
Hence $\Vert \Phi(a)\Vert_{\infty} \leq \norm{a}_\cf$ for any $a\in\EXH(\cf)$.

\item Let us now prove that $\Vert \Phi(a)\Vert_{\infty} \geq \norm{a}_\cf$ for any $a\in\EXH(\cf)$. Let us take any $\varepsilon\in(0,1)$ and $F\in\cf$ that $\sum_{j\in F}|a_j|\geq (1-\varepsilon)\norm{a}_\cf$. Then 
we may take such $\eta\in K$ that
\[
\eta_n = \begin{cases} 1 & n\in F \wedge a_n>0\cr
-1 & n\in F \wedge a_n<0\cr
0 & \text{otherwise}
\end{cases}
\]
Then we have $\eta\in K$ and
\[
\Vert\Phi(a)\Vert_\infty \geq \left|\sum_{n\in\bn} a_n\eta_n\right| = \sum_{n\in F}|a_n|\geq (1-\varepsilon)\norm{a}_\cf.
\]
Since the inequality given above is correct for any $\varepsilon(0,1)$ we can see that $\Vert \Phi(a)\Vert_{\infty} \geq \norm{a}_\cf$.

\item Since $\Phi$ is an isometry it is invertible on its range  and this observation completes the proof.
\end{enumerate}

\end{proof}

\begin{remark}
    The main obstacle when trying to repeat this construction for the space $\FIN(\cf)$ is that if $\FIN(\cf)\neq \EXH(\cf)$, then the space $\FIN(\cf)$ is not separable (see Theorem 5.4 in \cite{FBN}).
\end{remark}

\begin{remark}\label{rem:not:continuous}
    The proof of Lemma \ref{exh:fun:representation} may not be easily repeated for the space $\FIN(\cf)$. Moreover, it appears that the map $\Phi$ is not necessarily defined to the space $BC(K)$. We can easily see that for any $a\in\FIN(\cf)$ the function $\Phi(a)\colon K\to\br$ is well defined and bounded, but it appears it is not necessarily continuous. Let us take a  family $\cf$ containing sets in the form $F_n=\{1,n\}$ for $n\in\bn$ and such that the cardinality of sets belonging to $\cf$ is bounded by some constant. Then the constant sequence $a_n=1$ belongs to $\FIN(\cf)$, but the function $\Phi(a)$ is not continuous. To see this, let us take the sequence $(\xi_n)\subset K$, $\xi_{n,k}=\chfun_{F_n}(k)$. As we can see $\xi_n\to\xi$, where $\xi_k = \chfun_{\{1\}}(k)$. But $\Phi(a)(\xi_n)=2\not\to \Phi(a)(\xi)=1$. 
\end{remark}

Following Theorem 17 from \cite{GKM2} we can write the following compactness criteria expressed in the language of  the spaces of bounded continuous functions on topological space $K$ (not necessarily compact (see \cite{DS}, \cite{GKM2}):
\begin{theorem}\label{compact:bc}
 The set $A\subset \EXH(\cf)$ (i.e. $\Phi(A)\subset BC(K)$) is relatively compact iff 
 \begin{enumerate}
\item[(\ref{compact:bc}.1)] for each $\eta\in K$ the set $\{\Phi(a)(\eta) : a\in A\}\subset\br$ is bounded;
     \item[(\ref{compact:bc}.2)] for any $\varepsilon>0$ there exists the finite partition of the domain    $K=\bigcup_{i=1}^N K_i$ consisting of open sets $K_i\subset K$ such that 
 \[
 \forall_{\xi,\eta\in K_i} \forall_{a\in A} |\Phi(a)(\xi)-\Phi(a)(\eta)|<\varepsilon.
 \]
 \end{enumerate} 
\end{theorem}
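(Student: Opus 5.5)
We reduce the statement to a compactness criterion in $BC(K)$ with the supremum norm. By Lemma \ref{exh:fun:representation}, $\Phi$ is a linear isometry of $\EXH(\cf)$ onto its range $\Phi(\EXH(\cf))\subset BC_\infty(K)$. This range is complete because $\EXH(\cf)$ is a Banach space, so it is closed in $BC_\infty(K)$. Hence $A$ is relatively compact in $\EXH(\cf)$ if and only if $\Phi(A)$ is relatively compact in $BC_\infty(K)$, equivalently totally bounded there. The statement therefore follows once we know the Arzel\`a--Ascoli type criterion for $BC_\infty(K)$ on an arbitrary (not necessarily compact) space $K$. This is Theorem~17 of \cite{GKM2} (cf. \cite[IV.6]{DS}); we may quote it, or prove it directly along the following lines, in the spirit of Theorem \ref{th:old:linf}.

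\emph{Necessity.} Suppose $\Phi(A)$ is totally bounded. Condition (\ref{compact:bc}.1) is immediate, since $|\Phi(a)(\eta)|\le\norm{a}_\cf$. For (\ref{compact:bc}.2), fix $\varepsilon>0$ and choose a finite $\varepsilon/3$-net $f_1,\dots,f_m\in\Phi(A)$. Cover the bounded range $[-M,M]^m$ of the continuous map $F=(f_1,\dots,f_m)\colon K\to\br^m$ by finitely many cubes of side less than $\varepsilon/3$. Then pull these back. If $\xi,\eta$ lie in the same preimage, then for every $a\in A$, choosing $f_j$ with $\norm{\Phi(a)-f_j}_\infty<\varepsilon/3$, we get $|\Phi(a)(\xi)-\Phi(a)(\eta)|<\varepsilon$. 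Openness of the pieces is the delicate point, because a partition of $K$ into open sets forces them to be clopen. Here we exploit the structure of $K$: $\Phi(a)$ with $a\in c_{00}$ is locally constant. We first replace the net by elements $\Phi(a_j)$ with $a_j\in c_{00}$, approximating within $\varepsilon/6$ by truncation. Each $\Phi(a_j)$ then depends only on the coordinates $\xi_1,\dots,\xi_N$ for some $N$, and these take finitely many values in $\prod_{n\le N}D_n$. Partitioning $K$ according to the value of $(\xi_1,\dots,\xi_N)$ gives finitely many clopen cylinder sets on which every $\Phi(a_j)$ is constant. This yields (\ref{compact:bc}.2) with open pieces.

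\emph{Sufficiency.} Assume (\ref{compact:bc}.1) and (\ref{compact:bc}.2), and fix $\varepsilon>0$ with the partition $K_1,\dots,K_N$; we may discard empty pieces. Pick $\eta_i\in K_i$. By (\ref{compact:bc}.1) the set $\{(\Phi(a)(\eta_1),\dots,\Phi(a)(\eta_N)) : a\in A\}$ is bounded in $\br^N$, hence totally bounded. Choose $a^1,\dots,a^p\in A$ whose vectors form an $\varepsilon$-net of it. For $a\in A$ pick $a^r$ with $|\Phi(a)(\eta_i)-\Phi(a^r)(\eta_i)|<\varepsilon$ for all $i$. Then for $\xi\in K_i$ the triangle inequality through $\eta_i$ gives $|\Phi(a)(\xi)-\Phi(a^r)(\xi)|<3\varepsilon$. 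So $\Phi(A)$ is totally bounded in the sup norm. By the isometry and completeness of $\EXH(\cf)$, $A$ is relatively compact. Note that boundedness of $A$ in norm is not assumed but follows from this argument.

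The main obstacle is the openness requirement in (\ref{compact:bc}.2) in the necessity direction. The general cube-preimage construction only gives sets with small oscillation, not open ones. We overcome this with the $c_{00}$ approximation and the cylinder partition, which use that $K\subset\prod D_n$ has a basis of clopen cylinders.
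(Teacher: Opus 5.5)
Your proof is correct, and it takes a different route from the paper, which gives no argument for Theorem~\ref{compact:bc} beyond transferring the $BC(X)$ criterion of \cite[Theorem~17]{GKM2} (and Dunford--Schwartz) through the isometry $\Phi$.

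Your sufficiency half is the classical Dunford--Schwartz argument: a finite net for the vectors $(\Phi(a)(\eta_i))_{i\le N}\in\br^N$, followed by the triangle inequality through the representatives $\eta_i$. It needs neither openness of the $K_i$ nor continuity of $\Phi(a)$, only $\norm{\Phi(a)}_\infty=\norm{a}_\cf$ and completeness of $\EXH(\cf)$.

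The substantive difference is in necessity. A criterion valid for an arbitrary space $X$ can only give a finite open \emph{cover} (your cube preimages) or a partition into arbitrary sets. It can never give a partition into open sets: for connected $X$ such partitions are trivial, so a single nonconstant function on $[0,1]$ would already violate the condition. Hence the open-partition form of (\ref{compact:bc}.2) genuinely depends on the structure of $K$. The citation leaves this implicit, and your truncation step supplies it. With $\supp a_j\subset[1,N]$, each $\Phi(a_j)$ is constant on the clopen cylinders $\{\xi\in K : (\xi_1,\dots,\xi_N)=\hat\xi\}$, $\hat\xi\in\Pi_N(K)$. Then $\norm{\Phi(a)-\Phi(a_j)}_\infty<\varepsilon/3+\varepsilon/6$ bounds the oscillation of every $\Phi(a)$ on each cylinder by $\varepsilon$. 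You should write that final estimate out, and the cube paragraph can simply be dropped.

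Your construction also gives more than the statement asks: the pieces are cylinders of one fixed level $N$. Remark~\ref{rem:uniform:limit} tacitly uses exactly this property, namely that points agreeing in the first $n_0$ coordinates lie in the same $K_i$. For non-compact $K$ this does not follow from openness alone. For instance, for $\cf=\cf_{<\infty}$, the parity of $\min\{n:\xi_n\neq 1\}$ splits $K$ into two open sets that are not determined by any finite set of coordinates. So your version is what actually makes the necessity half of Theorem~\ref{th:compact:exh} go through.
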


So we can see the compactness is expressed in terms of the properties of functions $\Phi(a)$: the functions are {\em almost constant} on certain sets uniformly with respect to $a\in A$. Now we will be trying to translate these conditions to the properties of the set $A\subset\EXH(\cf)$.

\begin{remark}
    The condition $(\ref{compact:bc}.1)$ in Theorem \ref{compact:bc} is satisfied  if and only if $A\subset \EXH(\cf)$ is bounded.
\end{remark}

\begin{remark}\label{rem:uniform:limit}The condition $(\ref{compact:bc}.2)$ in Theorem \ref{compact:bc} states that all of the sets $K_i$ are open in a product topology. This means that there exists such $n_0\in\bn$ that for all $n\geq n_0$ and for $i\in\{1,...,N\}$ all the projections of $K_i$ into $n$-th coordinate equals to $D_n$. 
Hence, assume now that for a fixed $\varepsilon>0$ condition (\ref{compact:bc}.2) is satisfied.
This in turn implies that for a fixed $\varepsilon>0$ there exists such $n_0\in\bn$ that for any fixed $i\in\{1,...,N\}$, any $a\in A$ and any $F\in\cf$ such that $F\cap[n_0+1,\infty)\neq\emptyset$ we may take $\xi,\eta\in K$ such that $\supp\xi = \supp\eta = F$, $\xi_n=\eta_n=1$ for $n\in F\cap\{1,...,n_0\}$ and
\[
\xi_n = \begin{cases}
    1 & a_n \geq 0 \cr -1 & a_n < 0,
\end{cases}
\]
and $\eta_n=-\xi_n$. We can see that since $\xi$ and $\eta$ are identical on first $n_0$ coordinates, then they belong to the same set $K_i$. Then we have 
\[
\varepsilon > |\Phi(a)(\xi)-\Phi(a)(\eta)| = \left| \sum_{n\in F} a_n(\xi_n-\eta_n) \right| = 
2\sum_{n\in F\cap[n_0+1,+\infty)} |a_n|.
\]
Hence we may observe that if the condition $(\ref{compact:bc}.2)$ is satisfied then the limit 
\[
\lim_{n \to \infty} \Vert a \cdot \chfun_{[n+1,+\infty)\}}\Vert_{\cf} = 0
\]
is uniform with respect to elements $a\in A$.
\end{remark}

The last remark will be clarified in the next section.

\section{Compactness in combinatorial Banach spaces}

The Remark \ref{rem:uniform:limit} stated at the end of the previous section appears to be a good starting point for the compactness criteria in any combinatorial Banach space.

\begin{theorem}\label{th:compact:exh}
The following conditions are equivalent for any bounded set $A\subset \EXH(\cf)$: \begin{itemize}
    \item[(\ref{th:compact:exh}.1)] the set $A$ is relatively compact;
    \item[(\ref{th:compact:exh}.2)] for any $\varepsilon>0$ there exists such $n\in\bn$ that 
    \[
    \left\Vert a\cdot\chfun_{[n+1,+\infty)}\right\Vert_\cf = \sup_{F\in\cf}\sum_{k\in [n+1,+\infty)\cap F} |a_k|<\varepsilon,
    \]
    for all $a\in A$.
\end{itemize}
\end{theorem}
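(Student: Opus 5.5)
The plan is to prove both implications directly, using the abstract criterion of Theorem \ref{thm:compactness_ver2} as the main tool and Remark \ref{rem:uniform:limit} as a cross-check. For each $m\in\bn$ let $\norm{a}_m=\norm{a\cdot\chfun_{[1,m]}}_\cf$. These seminorms increase with $m$, so (A2) holds. For $a\in\EXH(\cf)$ we have $\norm{a}_\cf=\sup_m\norm{a}_m$, because every $F\in\cf$ is finite and hence lies in some $[1,m]$; this gives (A1). Each $\norm{\cdot}_m$ depends only on the first $m$ coordinates and there are countably many of them, so by the Remark following Theorem \ref{thm:compactness_ver2} condition (A4) holds via a diagonal argument. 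Boundedness of a sequence in $\norm{\cdot}_\cf$ bounds each coordinate, since $\bn$ is covered by $\cf$. Consequently, a bounded $A$ is relatively compact if and only if $A-A$ is equinormed. Relative compactness and precompactness coincide here because $\EXH(\cf)$ is complete, being a closed subspace of the Banach space $\FIN(\cf)$.

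For (\ref{th:compact:exh}.2)$\Rightarrow$(\ref{th:compact:exh}.1), I would use the estimate $\norm{x}_\cf\le \norm{x}_m+\norm{x\cdot\chfun_{[m+1,\infty)}}_\cf$, which holds for every $x$. This follows by splitting each sum over $F$ into $F\cap[1,m]$ and $F\cap[m+1,\infty)$. Given $\varepsilon$, choose $n$ from (\ref{th:compact:exh}.2) with $\varepsilon/2$. For $x=a-b$ with $a,b\in A$, the tail term is at most $\varepsilon$ by the triangle inequality. Hence $\norm{x}_\cf\le\varepsilon+\norm{x}_n$, so $A-A$ is equinormed, and the theorem gives precompactness.

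For (\ref{th:compact:exh}.1)$\Rightarrow$(\ref{th:compact:exh}.2), I would argue directly with a finite $\varepsilon/3$-net $a^1,\dots,a^N\subset A$, which exists since $A$ is precompact. Each $a^i\in\EXH(\cf)$, so there is $n_i$ with $\norm{a^i\cdot\chfun_{[n_i+1,\infty)}}_\cf<\varepsilon/3$. Let $n=\max_i n_i$. Multiplication by $\chfun_{[n+1,\infty)}$ is a contraction in $\norm{\cdot}_\cf$. So for $a\in A$ with $\norm{a-a^i}_\cf<\varepsilon/3$ we get
\[
\norm{a\cdot\chfun_{[n+1,\infty)}}_\cf\le \norm{(a-a^i)\cdot\chfun_{[n+1,\infty)}}_\cf+\norm{a^i\cdot\chfun_{[n+1,\infty)}}_\cf<\tfrac{2\varepsilon}{3},
\]
which is (\ref{th:compact:exh}.2). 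Alternatively, this direction is exactly the content of Remark \ref{rem:uniform:limit} combined with Theorem \ref{compact:bc} and Lemma \ref{exh:fun:representation}. The direct $\varepsilon$-net argument is cleaner, so I would use it.

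The main point needing care is the verification of (A4) and of the completeness and closedness of $\EXH(\cf)$. A subsequence that is Cauchy in every $\norm{\cdot}_m$ only requires coordinatewise convergence on each finite block, which follows from coordinatewise boundedness and a diagonal argument. Closedness of $\EXH(\cf)$ in $\FIN(\cf)$ is a standard $\varepsilon/2$ argument using the same tail contraction. Neither step seems to present a real obstacle.
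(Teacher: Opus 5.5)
Your proof is correct, but it takes a different route from the paper's.

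\textbf{The paper's route.} The paper works through the isometric embedding $\Phi\colon\EXH(\cf)\to BC(K)$ of Lemma~\ref{exh:fun:representation} and the partition criterion of Theorem~\ref{compact:bc}. For sufficiency it partitions $K$ into the cylinder sets fixed by the first $n$ coordinates; on each of these, every $\Phi(a)$ oscillates by at most $2\norm{a\cdot\chfun_{[n+1,\infty)}}_\cf$. For necessity it uses the sign-flip construction of Remark~\ref{rem:uniform:limit}.

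\textbf{Your route.} You never use $K$. Sufficiency is Theorem~\ref{thm:compactness_ver2} with the truncation seminorms $\norm{a\cdot\chfun_{[1,m]}}_\cf$. Necessity is a direct $\varepsilon$-net argument that uses only two facts: the finitely many net points have small tails, and tail truncation is a contraction.

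\textbf{What your route buys.} This is essentially the route the paper takes afterwards via Lemma~\ref{lem:exh:compact:cond}, but your choice of seminorms is better. Your $\norm{a}_m$ is exactly $\norm{a}_{\cb}$ for $\cb=\{F\cap[1,m]:F\in\cf\}$, the family used in that lemma. You treat it as a seminorm in its own right instead of insisting on indexing by finite subfamilies of $\cf$. That removes the hereditary hypothesis the paper imposes on that route.

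Your necessity argument is also more robust than Remark~\ref{rem:uniform:limit}. That remark assumes the pieces $K_i$ of an arbitrary open partition are determined by finitely many coordinates. This is automatic when $K$ is compact, but not in general. For instance, with $\cf=\cf_{<\infty}$, split $K$ according to the parity of $\min\{k:\xi_k\neq 1\}$: both pieces are open, yet no finite set of coordinates determines them.

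\textbf{What the paper's route buys.} It gives the ``almost constant on a finite partition'' picture. That is the version that carries over to $\FIN(\cf)$ through $B(K)$ in Theorem~\ref{th:compact:b}, where no tail condition is available.
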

\begin{proof}
 Obviously the statement that $A\subset \EXH(\cf)$ is relatively compact is equivalent to saying that $\Phi(A)\subset BC(K)$ is relatively compact. Since for bounded $A$ the set $\Phi(A)(K)\subset\br$ is automatically bounded it is enough to focus on proving the conditions $(\ref{compact:bc}.2)$ and (\ref{th:compact:exh}.2) are equivalent.   

 If $\Phi(A)\subset BC(K)$ is relatively compact, then by Theorem \ref{compact:bc} the condition (\ref{compact:bc}.2) is satisfied, and repeating the reasoning given in  Remark \ref{rem:uniform:limit} we can see that condition $(\ref{th:compact:exh}.2)$ is satisfied as well.

 On the other hand, let us assume that condition $(\ref{th:compact:exh}.2)$ is satisfied for $\varepsilon/2$. Let us denote the projection from $K$ to the product $\prod_{i=1}^n D_i$ by $\Pi_n\colon K\to \prod_{i=1}^n D_i$ and let us take the set $\Xi\subset \prod_{i=1}^n D_i$ given by $\Xi = \Pi_n(K)$. The set $\Xi$ is finite and we may use its elements to build the open cover $K_{\hat\xi}$ of $K$ and indexed by elements $\hat\xi\in\Xi$. Let us take
 \[
 K_{\hat\xi} = \{ \xi\in K : \xi_k = \hat\xi(k) {\text{\ for\ }} k\in[1,n] \} = \{\hat\xi\}\times\Pi_{i=n+1}^\infty D_i.
 \]
 There are finitely many such sets, all of them are open and their sum covers entire space $K$. Now let us take any two sequences $\xi,\eta\in K_{\hat\xi}$ and any $a\in A$. Then we have
 \[
 |\Phi(a)(\xi)-\Phi(a)(\eta)| = \left| \sum_{k=1}^n (\xi_k-\eta_k)a_k + \sum_{k=n+1}^{+\infty} (\xi_k-\eta_k)a_k \right| \leq 
 \]
 \[
 \sum_{k=n+1}^{+\infty} |\xi_k| |a_k| + \sum_{k=n+1}^{+\infty} |\eta_k| |a_k| \leq
 \sum_{k\in\supp(\xi)\cap[n+1,\infty)} |a_k| + \sum_{k\in\supp(\eta)\cap[n+1,\infty)} |a_k| \leq 2\cdot \frac{\varepsilon}{2},
 \]
 what completes the proof.
\end{proof}

Let us also observe that this compactness criterion may be concluded directly from Theorem \ref{thm:compactness_ver2}.

Let us start with a definition of a seminorm related to a family $\cb\subset \cf$:
\[
\Vert x\Vert_{\cb} = \sup_{F\in \cb} \sum_{n\in F} |x_n|.
\]

Now we are ready to define  a family of seminorms $(\Vert\cdot\Vert_{\cb})_{\cb\subset\cf\wedge |\cb|<\infty}$ in $\FIN(\phi)$ indexed by a (countable) family of finite subsets of $\cf$.
The family obviously satisfies conditions \textbf{\textup{(A1)}}, \textbf{\textup{(A2)}} and \textbf{\textup{(A4)}}. From now on, whenever we refer to an equinormed set, it means it is equinormed with respect to this family of seminorms.

What remains to be proven is the following Lemma:
\begin{lemma}\label{lem:exh:compact:cond}
Assume the family $\cf$ is hereditary. The following conditions are equivalent for a bounded set $A\subset\EXH(\cf)$:
\begin{itemize}
\item[(i)] the set $A-A$ is equinormed;
\item[(ii)] for any $\varepsilon>0$ there exists such $n\in\bn$ that 
    \[
    \sum_{k\in [n+1,+\infty)\cap F} |a_k|<\varepsilon,
    \]
    for all $F\in\cf$ and all $a\in A$.    
\end{itemize}
\end{lemma}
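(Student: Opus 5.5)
We prove the two implications separately. Throughout, the seminorms are $\Vert\cdot\Vert_{\cb}$ with $\cb\subset\cf$ finite. The point is that any finite family $\cb$ has $\bigcup\cb$ contained in some initial interval $[1,m]$. Hence $\Vert x\Vert_{\cb}$ depends only on the coordinates $x_1,\dots,x_m$.

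\textbf{(ii)$\Rightarrow$(i).} Fix $\varepsilon>0$ and use (ii) with $\varepsilon/4$ to get $n$. The set $\cf_n=\{F\cap[1,n]: F\in\cf\}$ is a family of subsets of $[1,n]$, hence finite. Since $\cf$ is hereditary, $\cf_n\subset\cf$, so $\cb:=\cf_n$ is an admissible index. Take $x=a-b$ with $a,b\in A$ and any $F\in\cf$. Split
\[
\sum_{k\in F}|x_k|=\sum_{k\in F\cap[1,n]}|x_k|+\sum_{k\in F\cap[n+1,\infty)}|x_k| .
\]
The first sum is at most $\Vert x\Vert_{\cb}$. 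The second is at most $\sum_{k\in F\cap[n+1,\infty)}(|a_k|+|b_k|)<\varepsilon/2$. Taking the supremum over $F$ gives $\Vert x\Vert_\cf\le \Vert x\Vert_\cb+\varepsilon$, which is equinormedness of $A-A$. Heredity is used exactly here, to ensure $\cb\subset\cf$; without it we could instead use the finitely many sets $F\in\cf$ realizing the traces, though that would require more care.

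\textbf{(i)$\Rightarrow$(ii).} Fix $\varepsilon>0$ and let $\cb$ be the finite family given by equinormedness with $\varepsilon/4$, with $\bigcup\cb\subset[1,m]$. The main obstacle is that (i) controls only differences $a-b$, while (ii) concerns single elements $a$.

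To handle this, we first use that $A$ is relatively compact in the weaker sense needed. Concretely, we pass to a finite net on the first $m$ coordinates. The set $\{(a_1,\dots,a_m):a\in A\}$ is bounded in $\br^m$, so it can be covered by finitely many sets of small diameter. Choose representatives $b^{(1)},\dots,b^{(r)}\in A$ such that every $a\in A$ has some $j$ with $\Vert a-b^{(j)}\Vert_\cb<\varepsilon/4$. The net must be measured in $\Vert\cdot\Vert_\cb$, which is a seminorm on $\br^m$ dominated by a multiple of the $\ell^1$ norm, so this is possible. Then $\Vert a-b^{(j)}\Vert_\cf\le \varepsilon/4+\Vert a-b^{(j)}\Vert_\cb<\varepsilon/2$.

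Each $b^{(j)}\in\EXH(\cf)$, so there is $n$ with $\Vert b^{(j)}\chfun_{[n+1,\infty)}\Vert_\cf<\varepsilon/2$ for all $j=1,\dots,r$. Finally, for any $a\in A$ and $F\in\cf$,
\[
\sum_{k\in F\cap[n+1,\infty)}|a_k|\le \sum_{k\in F\cap[n+1,\infty)}|a_k-b^{(j)}_k|+\sum_{k\in F\cap[n+1,\infty)}|b^{(j)}_k|<\varepsilon ,
\]
which is (ii). The key step is this finite-net argument, which converts the difference condition into a uniform tail condition.
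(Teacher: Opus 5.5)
Your proof is correct. For (ii)$\Rightarrow$(i) you do exactly what the paper does: take the finite family of traces $F\cap[1,n]$, use heredity to place it inside $\cf$, and split each sum at $n$. The paper discards the empty trace, which makes no difference either way.

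For (i)$\Rightarrow$(ii) your route is genuinely different. The paper argues indirectly. It invokes the abstract criterion, Theorem~\ref{thm:compactness_ver2}, which relies on \textbf{(A4)} obtained by a diagonal argument, to conclude that $A$ is relatively compact. It then argues by contradiction: a failure of (ii) yields $a_n\in A$ and $F_n\in\cf$ with tail mass $>\delta$. Using the $\EXH$ property of each fixed $a_n$, it shows $\Vert a_n-a_k\Vert_\cf\ge\delta/2$ for all large $k$, so no subsequence is Cauchy.

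You instead build a finite $\varepsilon/2$-net of $A$ for $\Vert\cdot\Vert_\cf$ directly. You use that $\Vert\cdot\Vert_\cb$ only sees the coordinates in $[1,m]$, and that $A$ projects to a bounded, hence totally bounded, subset of $\br^m$ (since $|a_k|\le\Vert a\Vert_\cf$, as $\cf$ covers $\bn$). You then pass the uniform tail estimate from the finitely many net points to all of $A$ by the triangle inequality. In effect you reprove inline the half of the abstract theorem that is needed (bounded and $A-A$ equinormed implies totally bounded), and you replace the sequential contradiction by a net argument.

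What each buys: your proof is self-contained, does not need \textbf{(A4)}, and produces $n$ explicitly. The paper's version is shorter given the machinery. Its contradiction step is exactly a direct proof that relatively compact sets satisfy (ii), i.e.\ the necessity half of Theorem~\ref{th:compact:exh}.

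One presentational point: the sentence ``we first use that $A$ is relatively compact in the weaker sense needed'' reads as if you were assuming something. What follows is a proof of total boundedness and should be announced as such.
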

\begin{proof}
Let us first prove the implication $(ii)\Rightarrow (i)$. Let us first take any $\varepsilon>0$ and assume the condition $(ii)$ is satisfied for $\varepsilon/3$ and fix $n\in\bn$.  Let us define the seminorm induced by a set of all nonempty subsets of $\bn$ being the intersections $F\cap[1,n]$, where $F\in\cf$ i.e. 
\[
\cb = \{ F\cap [1,n] : F\in\cf \wedge F\cap[1,n]\neq\emptyset \}.
\]
Let us take any $a,b\in A$. There exists such $F\in\cf$ that 
\[
\sum_{k\in F} |a_k-b_k|\geq \Vert a-b\Vert_\cf -\frac{\varepsilon}{3}.
\]
On the other hand we have
\[
\Vert a-b\Vert_{\cb} \geq \sum_{k\in F\cap[1,n]}|a_k-b_k| =   
\sum_{k\in F}|a_k-b_k| - \sum_{k\in F\cap[n+1,\infty)}|a_k-b_k| \geq 
\]
\[
\sum_{k\in F}|a_k-b_k|  - \sum_{k\in F\cap[n+1,\infty)}|a_k|  - \sum_{k\in F\cap[n+1,\infty)}|b_k| \geq \Vert a-b\Vert_\cf -\frac{\varepsilon}{3} -\frac{\varepsilon}{3}-\frac{\varepsilon}{3},
\]
hence $A-A$ is equinormed.

Let us now proceed to the proof of the implication $(i)\Rightarrow (ii)$. We will show that if we assume, contrary to our claim, that $(ii)$ is not satisfied, then the set $A$ may not be relatively compact. This, in turn, will contradict that $A-A$ is equinormed. So, if $(ii)$ is not satisfied then, for certain $\delta>0$ and  for each $n\in\bn$ there exist $F_n\in\cf$ and $a_n\in A$ such that 
\[
\sum_{k\in[n+1,\infty)\cap F_n} |a_{n,k}|>\delta.
\]
We will show that the sequence $(a_n)\subset A$ may not contain a convergent subsequence. We will show that for any $n\in\bn$ there exists $m_n>n$ such that for all $k\geq m_n$
\[
\Vert a_n-a_k\Vert_\cf \geq \frac{\delta}{2} > 0.
\]
For any fixed $n\in\bn$, since $a_n\in\EXH(\cf)$ there exists  $m_n\in\bn$ such that 
\[
\Vert a_n\cdot\chfun_{[m_n,\infty)}\Vert_{\cf} <\frac{\delta}{2}
\]
We may of course assume that $m_n>n$. Then for any $k\geq m_n$ we have 
\[
\Vert a_n-a_k\Vert_\cf \geq \sum_{i\in [k,\infty)\cap F_k}|a_{n,i}-a_{k,i}|\geq
\sum_{i\in [k,\infty)\cap F_k}|a_{k,i}|- \sum_{i\in [k,\infty)\cap F_k}|a_{n,i}|
\geq \delta - \frac{\delta}{2}.
\]
This shows that $(a_n)$ does not contain any Cauchy subsequence.
\end{proof}

Theorem \ref{th:compact:exh} and Lemma \ref{lem:exh:compact:cond} leads to the following theorem.
\begin{theorem}\label{th:compact:exh:2}
Assume the family $\cf$ is hereditary. The following conditions are equivalent for any bounded set $A\subset \EXH(\cf)$: \begin{itemize}
    \item[(\ref{th:compact:exh:2}.1)] the set $A$ is relatively compact;
    \item[(\ref{th:compact:exh:2}.2)] for any $\varepsilon>0$ there exists such $n\in\bn$ that 
    \[
    \sum_{k\in [n+1,+\infty)\cap F} |a_k|<\varepsilon,
    \]
    for all $F\in\cf$ and all $a\in A$. 
\end{itemize}    
\end{theorem}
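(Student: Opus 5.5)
The plan is to obtain Theorem \ref{th:compact:exh:2} from the abstract criterion, Theorem \ref{thm:compactness_ver2}, combined with Lemma \ref{lem:exh:compact:cond}. A more direct route is also available, since condition (\ref{th:compact:exh:2}.2) is just condition (\ref{th:compact:exh}.2) of Theorem \ref{th:compact:exh} with the supremum over $F\in\cf$ written out.

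First I will check that the family of seminorms $(\Vert\cdot\Vert_{\cb})_{\cb\subset\cf,\,|\cb|<\infty}$ satisfies the hypotheses of Theorem \ref{thm:compactness_ver2} on $\EXH(\cf)$.
\begin{itemize}
\item For \textbf{(A1)}: $\Vert x\Vert_\cf=\sup_{\cb}\Vert x\Vert_\cb$, since every $F\in\cf$ gives the singleton family $\{F\}$.
\item For \textbf{(A2)}: given $\cb_1,\cb_2$, the union $\cb_1\cup\cb_2$ dominates both seminorms.
\item For \textbf{(A4)}: there are countably many finite subfamilies of $\cf$, since $\cf$ consists of finite subsets of $\bn$ and is therefore countable. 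Each $\Vert\cdot\Vert_\cb$ depends only on the finitely many coordinates in $\bigcup\cb$. A bounded sequence in $\EXH(\cf)$ is coordinatewise bounded, because $|x_n|\le\Vert x\Vert_\cf$ as $\{n\}$ lies in some $F\in\cf$. A diagonal argument then yields a subsequence converging in every coordinate, hence Cauchy in every $\Vert\cdot\Vert_\cb$, as noted in the Remark after Theorem \ref{thm:compactness_ver2}.
\end{itemize}

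Next I will note that in the complete space $\EXH(\cf)$ precompactness coincides with relative compactness. Completeness holds because $\EXH(\cf)$ is a closed subspace of $\FIN(\cf)$, or alternatively via the isometry of Lemma \ref{exh:fun:representation}. Hence, for bounded $A$, Theorem \ref{thm:compactness_ver2} says that (\ref{th:compact:exh:2}.1) holds if and only if $A-A$ is equinormed.

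Since $\cf$ is hereditary, Lemma \ref{lem:exh:compact:cond} states that $A-A$ is equinormed if and only if condition (ii) of that lemma holds. Condition (ii) is verbatim (\ref{th:compact:exh:2}.2). Chaining the two equivalences gives the theorem.

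There is no genuine obstacle here. The only delicate point is the \textbf{(A4)} verification, which needs countability of $\cf$ and coordinatewise boundedness, and both hold as shown. As a sanity check, I will also observe that $\sup_{F\in\cf}\sum_{k\in[n+1,\infty)\cap F}|a_k|<\varepsilon$ for all $a$ is equivalent, up to replacing $<\varepsilon$ by $\le\varepsilon$ and halving $\varepsilon$, to the pointwise-in-$F$ bound in (\ref{th:compact:exh:2}.2). So the result also follows immediately from Theorem \ref{th:compact:exh}, and that theorem does not even need the hereditary assumption.
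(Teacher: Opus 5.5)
Your proposal is correct and takes the paper's own route: the paper also uses the seminorms $\Vert\cdot\Vert_\cb$ indexed by finite $\cb\subset\cf$ (noting \textbf{(A1)}, \textbf{(A2)}, \textbf{(A4)}) and invokes Theorem~\ref{thm:compactness_ver2}, reducing everything to Lemma~\ref{lem:exh:compact:cond}, where heredity makes $\{F\cap[1,n]\}$ a legitimate finite subfamily of $\cf$; your closing observation is also right, namely that (\ref{th:compact:exh:2}.2) is trivially equivalent to (\ref{th:compact:exh}.2), so Theorem~\ref{th:compact:exh} gives the result even without heredity. The only slip is the ``alternative'' completeness argument via the isometry of Lemma~\ref{exh:fun:representation}: an isometric embedding into the complete space $BC(K)$ gives completeness only if its image is closed, so rely on $\EXH(\cf)$ being a closed subspace of $\FIN(\cf)$ instead.
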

The proof is omitted as it is direct application of mentioned facts.

\section{Compactness in $\FIN$ extensions of combinatorial Banach spaces }

Let $\mathcal{F}$ be a family of finite sets in $\bn$, yielding a Banach space $\FIN(\cf)$ with a norm $\Vert\cdot\Vert_\mathcal{F}$. We wish to investigate when a set $A \subseteq \FIN(\cf)$ is compact.

\subsection{Function space representation of $\FIN(\cf)$}

First, similarly as we did in Section \ref{sec:function:space}, we are going to show that there exists a subspace of appropriate Banach function space which is isometric to $\FIN(\cf)$. Generally, it may not be the space $BC(K)$ as we observed in Remark \ref{rem:not:continuous}. 

\begin{lemma}\label{fin:fun:representation}
The space $\FIN(\cf)$ is isomorphic to the appropriate subspace of the space $B(K)$ of bounded functions $f\colon K\to\br$ endowed with a supremum norm.
\end{lemma}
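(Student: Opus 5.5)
The plan is to reuse the map from Lemma \ref{exh:fun:representation}, now defined on all of $\FIN(\cf)$ and with values in $B(K)$:
\[
\Phi(a)(\xi)=\sum_{n\in\bn} a_n\xi_n, \qquad a\in\FIN(\cf),\ \xi\in K .
\]
For every $\xi\in K$ the set $\supp(\xi)\in\cf$ is finite, so the sum is finite and well defined for any $a\in\br^\bn$. Continuity of $\Phi(a)$ is no longer needed, which is exactly why we pass from $BC(K)$ to $B(K)$; Remark \ref{rem:not:continuous} shows that continuity can in fact fail.

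The key steps, in order, are as follows.
\begin{enumerate}
\item[(a)] Linearity of $\Phi$ is immediate, since the sum is finite for each fixed $\xi$.
\item[(b)] Boundedness with $\norm{\Phi(a)}_\infty\le\norm{a}_\cf$. For $\xi\in K$ we have $\supp(\xi)\in\cf$ and $|\xi_i|\le 1$, so
\[
|\Phi(a)(\xi)|\le\sum_{i\in\supp(\xi)}|a_i|\le\norm{a}_\cf .
\]
This is the estimate already noted in step 1 of the proof of Lemma \ref{exh:fun:representation}, and it was observed there that it holds for every $a\in\FIN(\cf)$. No density argument through $c_{00}$ is needed, so the obstacle from the earlier remark (non-separability of $\FIN(\cf)$) does not arise.
\item[(c)] The reverse inequality $\norm{\Phi(a)}_\infty\ge\norm{a}_\cf$. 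Fix $\varepsilon\in(0,1)$ and choose $F\in\cf$ with $\sum_{j\in F}|a_j|\ge(1-\varepsilon)\norm{a}_\cf$. Then build the sign vector $\eta$ as in step 4 of Lemma \ref{exh:fun:representation}, which gives $\Phi(a)(\eta)=\sum_{j\in F}|a_j|$.
\end{enumerate}
Steps (b) and (c) together make $\Phi$ a linear isometry onto its range $\Phi(\FIN(\cf))\subset B(K)$. This is stronger than the claimed isomorphism. Injectivity also follows directly, since $\{n\}\subset F$ for some $F\in\cf$ because $\cf$ covers $\bn$.

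The one delicate point is in step (c): $\eta$ must lie in $K$, i.e.\ $\supp(\eta)\in\cf$. If $a_j=0$ for some $j\in F$, then the naive sign vector has support $F\cap\supp(a)$, which lies in $\cf$ only if $\cf$ is hereditary. I would avoid this by setting $\eta_j=1$ for every $j\in F$ with $a_j\ge 0$ and $\eta_j=-1$ for $a_j<0$. Then $\supp(\eta)=F\in\cf$, each $\eta_j\in\{-1,1\}\subset D_j$, and $\eta\in c_{00}$, so $\eta\in K$. The identity $\Phi(a)(\eta)=\sum_{j\in F}|a_j|$ still holds. With this choice every step is routine.
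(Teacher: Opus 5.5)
Your proof is correct and follows the paper's argument: it uses the same map $\Phi(a)(\xi)=\sum_n a_n\xi_n$, the direct bound $|\Phi(a)(\xi)|\le\norm{a}_\cf$ with no density argument, and, for the reverse inequality, the sign-vector construction from the proof of Lemma~\ref{exh:fun:representation}, which the paper simply cites as ``identical''. Your choice $\eta_j=1$ when $a_j=0$ is a small but genuine improvement: the paper's $\eta$ has support $F\cap\supp(a)$, which need not belong to $\cf$ when $\cf$ is not hereditary, whereas yours has support exactly $F\in\cf$.
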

\begin{proof}
We will do it in a series of steps:

\begin{enumerate}
\item 
Let us first define the map $\Phi\colon \FIN(\cf)\to B(K)$ by
\[
\Phi(a)(\xi) = \sum_{n\in\bn} a_n\xi_n = \sum_{n\in\supp(\xi)} a_n\xi_n
\]
for $a\in\FIN(\cf)$ and $\xi\in K$. Since $\supp(\xi)$ is finite for any $\xi\in K$ the sum is well defined for any sequence $a\in\FIN(\cf)$. 

We can see that for any $a\in\FIN(\cf)$ and $\xi\in K$
\[
|\Phi(a)(\xi)| = \left| \sum_{n\in\supp(\xi)} a_n\xi_n \right| \leq  \sum_{n\in\supp(\xi)} |a_n| \leq\Vert a\Vert_{\cf},
\]
what means that $\Phi(a)\in B(K)$.

\item The map $\Phi:\FIN(\cf)\to B(K)$ is obviously linear, now we are going to show that it is continuous. Let us take any $a\in \FIN(\cf)$. Then 
\[
\norm{\Phi(a)}_{\infty} = \sup_{\xi\in K} |\Phi(a)(\xi)| \leq \Vert a\Vert_\cf.
\]
This means $\Phi$ is linear and continuous with a norm $\Vert \Phi\Vert\leq 1$.

\item Similarly as in the proof of Lemma \ref{exh:fun:representation} we can see that $\Phi$ is an isometry and it is invertible (from its image). The proof is identical, so it will not be repeated.
\end{enumerate}
\end{proof}

It appears that we may embed $\FIN(\cf)$ into the space of bounded and continuous functions defined on $K$ when $K$ is endowed with a supremum metric, i.e.
\[
d_\infty(\eta,\xi) = \sup_{n\in\bn}|\eta_n-\xi_n|.
\]
This space is obviously not complete. In light of Lemma \ref{fin:fun:representation} it is enough to prove that the functions $\Phi(a):K\to \br$ are continuous. So here we go:

\begin{lemma}\label{fin:cont:function:space}Assume $K$ is endowed with $d_\infty$ metric. Then, for $a\in\FIN(\cf)$ the map $\Phi(a):K\to \br$ is continuous.
\end{lemma}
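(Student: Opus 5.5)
The plan is to show directly that $\Phi(a)$ is Lipschitz on $(K,d_\infty)$, with constant $2\Vert a\Vert_\cf$. This gives continuity at once and uses neither compactness nor hereditarity of $\cf$.

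Fix $a\in\FIN(\cf)$ and $\xi,\eta\in K$, and put $\delta=d_\infty(\xi,\eta)$. Both $\xi$ and $\eta$ have finite support, so
\[
\Phi(a)(\xi)-\Phi(a)(\eta)=\sum_{n\in\supp(\xi)\cup\supp(\eta)} a_n(\xi_n-\eta_n).
\]
I will split the index set into the disjoint pieces $\supp(\xi)$ and $\supp(\eta)\setminus\supp(\xi)$.

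On the first piece, $|\xi_n-\eta_n|\le\delta$. Since $\supp(\xi)\in\cf$ (this is part of the definition of $K$),
\[
\sum_{n\in\supp(\xi)}|a_n||\xi_n-\eta_n|\le\delta\sum_{n\in\supp(\xi)}|a_n|\le\delta\Vert a\Vert_\cf .
\]
On the second piece, $\xi_n=0$, so $|\xi_n-\eta_n|=|\eta_n|\le\delta$. Also $\supp(\eta)\setminus\supp(\xi)\subset\supp(\eta)\in\cf$. Because the terms $|a_n|$ are nonnegative, the sum over a subset of a member of $\cf$ is bounded by the sum over that member, hence by $\Vert a\Vert_\cf$. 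This gives a second contribution of at most $\delta\Vert a\Vert_\cf$.

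Adding the two pieces yields
\[
|\Phi(a)(\xi)-\Phi(a)(\eta)|\le 2\Vert a\Vert_\cf\, d_\infty(\xi,\eta).
\]
The finiteness of $\Vert a\Vert_\cf$ for $a\in\FIN(\cf)$ is exactly what makes this bound meaningful.

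The only delicate point is choosing this split of the support. The sum over $\supp(\xi)\cup\supp(\eta)$ cannot be estimated directly, since that union need not lie in $\cf$ and could only be bounded by $2\Vert a\Vert_\cf$ anyway. Splitting it so that each piece sits inside a single member of $\cf$ avoids the problem.

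It is also worth noting, as a sanity check, why this works when Remark \ref{rem:not:continuous} fails. In the sup metric, a far-away coordinate can move only by at most $\delta$. In the product topology it can jump from $0$ to $1$, which is what breaks continuity there.
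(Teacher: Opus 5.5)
Your argument is correct. It follows a slightly different route from the paper's, and the difference is worth noting.

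The paper proves sequential continuity at a fixed point $\xi_0$. Since $\supp(\xi_0)$ is finite and each $D_k$ is a grid of mesh $4^{-k}$, once $d_\infty(\xi_n,\xi_0)<\min_{k\in\supp(\xi_0)}4^{-k}$ the coordinates of $\xi_n$ on $\supp(\xi_0)$ coincide exactly with those of $\xi_0$. The whole difference then lives on $\supp(\xi_n)\setminus\supp(\xi_0)\subset\supp(\xi_n)\in\cf$, where every coordinate has modulus at most $d_\infty(\xi_n,\xi_0)$. That is precisely your second piece.

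You instead bound the first piece, over $\supp(\xi)$, by $\delta\Vert a\Vert_\cf$ rather than making it vanish. This costs a factor $2$ but gives the global estimate $|\Phi(a)(\xi)-\Phi(a)(\eta)|\le 2\Vert a\Vert_\cf\, d_\infty(\xi,\eta)$, which is uniform (Lipschitz) continuity. It also never uses the discreteness of the coordinate sets $D_k$, so it applies verbatim to any finitely supported vectors whose supports lie in $\cf$. As a side benefit, it avoids the division by $\Vert a\Vert_\cf$ in the paper's proof, which needs a separate word when $a=0$.

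In exchange, the paper's route yields the sharper local constant $\Vert a\Vert_\cf$ near each point. The price is a neighbourhood whose size depends on $\xi_0$ through $\max\supp(\xi_0)$.
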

\begin{proof}
Let $\xi_n\to\xi_0$ in $K$, i.e. $d_\infty(\xi_n,\xi_0)\to 0$. Let us take any $\varepsilon>0$. Then for $n\in\bn$ big enough we know that $\xi_{n,k}$ is constant, i.e. $\xi_{n,k}=\xi_{0,k}$ for $k\in\supp(\xi_0)$ and   $|\xi_{n,k}|<\frac{\epsilon}{\Vert a\Vert_\cf}$   for $k\not\in\supp(\xi_0)$. Then
\[
|\Phi(a)(\xi_n)-\Phi(a)(\xi_0)|\leq \sum_{k\not\in\supp(\xi_0)}|a_k| |\xi_{n,k}|
=\sum_{k\in\supp(\xi_n)\setminus \supp(\xi_0)} |a_k| |\xi_{n,k}| 
\]
\[
\leq \frac{\epsilon}{\Vert a\Vert_\cf} \sum_{k\in\supp(\xi_n)\setminus \supp(\xi_0)} |a_k| \leq \varepsilon.
\]
This actually shows that $\Phi(a)$ is a continuous function.
\end{proof}

\subsection{Compactness criteria}

The obvious consequence of the general compactness criteria discussed in Section \ref{sec:abstract:compactness} is the following theorem:

\begin{theorem}\label{compact:fin:a-a}
 The set $A\subset \FIN(\cf)$  is relatively compact iff 
 \begin{enumerate}
\item[(\ref{compact:fin:a-a}.1)] it is bounded;
\item[(\ref{compact:fin:a-a}.2)] for any $\varepsilon>0$ there exists such $N\in\bn$ that for all $a,b\in A$
\[
\sup_{F\in\cf } \sum_{k\in F\cap[1,N]} |a_k-b_k| \geq \sup_{F\in\cf } \sum_{k\in F} |a_k-b_k| - \varepsilon.
\]
 \end{enumerate} 
\end{theorem}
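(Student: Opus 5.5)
The plan is to apply Theorem~\ref{thm:compactness_ver2} to $E=\FIN(\cf)$, equipped with a countable family of seminorms that depend on finitely many coordinates. I would not use the seminorms $\Vert\cdot\Vert_\cb$ indexed by finite $\cb\subset\cf$. Instead I would index by $N\in\bn$ and set
\[
\Vert x\Vert_N=\sup_{F\in\cf}\sum_{k\in F\cap[1,N]}|x_k| .
\]

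First I check the assumptions of Theorem~\ref{thm:compactness_ver2}.
\begin{itemize}
\item Each $\Vert\cdot\Vert_N$ is a seminorm, being a supremum of seminorms, and it is finite on $\FIN(\cf)$ since it is bounded by $\Vert x\Vert_\cf$.
\item The family is increasing in $N$, so \textbf{(A2)} holds with $k=\max\{i,j\}$.
\item For \textbf{(A1)}, I need $\sup_N\Vert x\Vert_N=\Vert x\Vert_\cf$. The inequality $\le$ is clear. For $\ge$, given $F\in\cf$, which is finite, take $N\ge\max F$; then $\sum_{k\in F}|x_k|\le\Vert x\Vert_N$.
\item For \textbf{(A4)}, each $\Vert\cdot\Vert_N$ depends only on the coordinates $x_1,\dots,x_N$. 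On $\br^N$ it is a seminorm dominated by a multiple of the $\ell^1$ norm, and $\cf$ covers $\bn$. A bounded sequence in $\FIN(\cf)$ is coordinatewise bounded, because $|x_k|\le\Vert x\Vert_\cf$ by covering. The diagonal argument from the Remark after Theorem~\ref{thm:compactness_ver2} then gives a subsequence converging in every coordinate. Such a subsequence is Cauchy for each $\Vert\cdot\Vert_N$, since $\Vert x\Vert_N\le\sum_{k\le N}|x_k|$.
\end{itemize}

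With the assumptions verified, Theorem~\ref{thm:compactness_ver2} says that $A$ is precompact if and only if $A$ is bounded and $A-A$ is equinormed. By Definition~\ref{def:equinormed}, equinormedness of $A-A$ means exactly this: for every $\varepsilon>0$ there is $N$ such that
\[
\Vert a-b\Vert_\cf\le\varepsilon+\Vert a-b\Vert_N
\]
for all $a,b\in A$. This is literally condition (\ref{compact:fin:a-a}.2). Since $\FIN(\cf)$ is a Banach space, precompactness is the same as relative compactness, which gives the theorem.

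The only point needing care is that the paper's earlier family of seminorms was indexed by finite $\cb\subset\cf$, whereas the statement is phrased with the truncations $F\cap[1,N]$. Working directly with $\Vert\cdot\Vert_N$ sidesteps this. If one insists on the $\Vert\cdot\Vert_\cb$ family, a conversion is needed between its equinormedness condition and the truncated one. One direction uses $\Vert x\Vert_\cb\le\Vert x\Vert_N$ with $N=\max\bigcup\cb$. The other would need compactness-type arguments, so I would avoid it.

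The other point is completeness of $\FIN(\cf)$ under $\Vert\cdot\Vert_\cf$, which I take as known: it is a submeasure sequence space, complete by \cite{FBN}. I expect no step to be a real obstacle. The mildest one is \textbf{(A4)}, specifically checking coordinatewise boundedness via the covering property.
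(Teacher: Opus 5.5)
Your proof is correct and follows the paper's route. You apply Theorem~\ref{thm:compactness_ver2} and read (\ref{compact:fin:a-a}.2) as equinormedness of $A-A$; the only difference is that your truncation seminorms $\Vert\cdot\Vert_N$ make this identification literal, whereas the paper indexes its seminorms by finite $\cb\subset\cf$ and leaves the translation implicit.

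Contrary to your closing remark, that translation needs no compactness argument. One has $\Vert x\Vert_\cb\le\Vert x\Vert_N$ whenever $N\ge\max\bigcup\cb$. Conversely, $[1,N]$ has only finitely many subsets, so picking one $B\in\cf$ with $B\cap[1,N]=F\cap[1,N]$ for each trace gives a finite $\cb$ with $\Vert x\Vert_N\le\Vert x\Vert_\cb$ (the same trick the paper uses in Proposition~\ref{prop:fin:exh:equinormed}). Hence both families produce exactly the same equinormed sets.
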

\begin{proof}
It is enough to observe that a condition (\ref{compact:fin:a-a}.2) is actually the rephrased statement     that the set $A-A$ is equinormed with respect to the family of seminorms $(\Vert\cdot\Vert_{\tilde\cf})_{\tilde\cf\subset\cf\wedge |\tilde\cf|<\infty}$ in $\FIN(\phi)$ indexed by a family of finite subsets of $\cf$.
\end{proof}

This is a valid criterion, yet we are looking for something better: a description in terms of the properties of the set elements of the set $A$ rather than in terms of properties of the set of differences $A-A$. In search for this we may refer to the function space representation discussed in the previous section.

We have the following compactness criteria in the spaces of bounded functions on the set $K$ (see \cite{DS}, \cite{GKM2}):
\begin{theorem}\label{th:compact:b}
 The set $A\subset \FIN(\cf)$ (i.e. $\Phi(A)\subset B(K)$) is relatively compact iff 
 \begin{enumerate}
\item[(\ref{th:compact:b}.1)] for each $\eta\in K$ the set $\{\Phi(a)(\eta) : a\in A\}\subset\br$ is bounded;
     \item[(\ref{th:compact:b}.2)] for any $\varepsilon>0$ there exists the finite cover of the domain    $K=\bigcup_{i=1}^N K_i$ into sets $K_i\subset K$ such that 
 \[
 \forall_{\xi,\eta\in K_i} \forall_{a\in A} |\Phi(a)(\xi)-\Phi(a)(\eta)|<\varepsilon.
 \]
 \end{enumerate} 
\end{theorem}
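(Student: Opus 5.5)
Theorem \ref{th:compact:b} is essentially the classical criterion for relative compactness in $B(K)$, the space of all bounded functions on an arbitrary set with the supremum norm. Combined with Lemma \ref{fin:fun:representation}, it transfers to $\FIN(\cf)$. Since $\Phi$ is an isometry of $\FIN(\cf)$ onto $\Phi(\FIN(\cf))\subset B(K)$, and $\FIN(\cf)$ is complete, the image is a closed subspace of $B(K)$. So $A$ is relatively compact in $\FIN(\cf)$ if and only if $\Phi(A)$ is totally bounded in $B(K)$, and it suffices to prove: a set $\mathcal{A}\subset B(K)$ is totally bounded if and only if it is pointwise bounded and satisfies (\ref{th:compact:b}.2). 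This is the argument I would give, in the spirit of Theorem \ref{th:old:linf} from \cite{GKM2}.

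\emph{Necessity.} Suppose $\Phi(A)$ is totally bounded. It is then norm bounded, which gives (\ref{th:compact:b}.1). Fix $\varepsilon>0$ and pick a finite $\varepsilon/3$-net $f_1,\dots,f_m$ in $\Phi(A)$. Each $f_j$ is bounded, so its range can be covered by finitely many intervals of length less than $\varepsilon/3$. For every choice of such intervals $(I_1,\dots,I_m)$, let $K_{(I_1,\dots,I_m)}=\{\xi\in K: f_j(\xi)\in I_j \text{ for all } j\}$. These are finitely many sets and they cover $K$. Take $\xi,\eta$ in one such set and any $a\in A$, and choose $j$ with $\norm{\Phi(a)-f_j}_\infty<\varepsilon/3$. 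The triangle inequality then gives $|\Phi(a)(\xi)-\Phi(a)(\eta)|<\varepsilon$.

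\emph{Sufficiency.} Fix $\varepsilon>0$ and a cover $K=\bigcup_{i=1}^N K_i$ as in (\ref{th:compact:b}.2); discard empty $K_i$ and pick points $\eta_i\in K_i$. By (\ref{th:compact:b}.1), the vector $(\Phi(a)(\eta_1),\dots,\Phi(a)(\eta_N))$ ranges, as $a$ varies over $A$, in a bounded subset of $\br^N$. This subset is covered by finitely many cubes of side $\varepsilon$. Consider two elements $a,b$ whose vectors fall in the same cube. For any $\xi\in K$, take $i$ with $\xi\in K_i$; then
\[
|\Phi(a)(\xi)-\Phi(b)(\xi)|\le|\Phi(a)(\xi)-\Phi(a)(\eta_i)|+|\Phi(a)(\eta_i)-\Phi(b)(\eta_i)|+|\Phi(b)(\eta_i)-\Phi(b)(\xi)|<3\varepsilon.
\]
So choosing one element of $A$ from each occupied cube gives a finite $3\varepsilon$-net, hence $\Phi(A)$ is totally bounded. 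Note that, unlike in Theorem \ref{compact:bc}, no openness of the $K_i$ is required, which matches the statement.

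The only delicate point is the bookkeeping that turns relative compactness in $\FIN(\cf)$ into total boundedness of $\Phi(A)$. This needs completeness of $\FIN(\cf)$, which holds since it is a submeasure space, and the isometry property of $\Phi$ from Lemma \ref{fin:fun:representation}. Everything else is the routine $\varepsilon/3$ argument above.
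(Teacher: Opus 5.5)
Your argument is correct and follows the paper's route: the paper transports $A$ into $B(K)$ via the isometry $\Phi$ of Lemma \ref{fin:fun:representation} and cites the classical relative-compactness criterion in $B(K)$ from \cite{DS} and \cite{GKM2}, whose standard $\varepsilon/3$ proof you have written out. The isometry you rely on does hold, but for non-hereditary $\cf$ its proof needs the sign vector $\eta$ to be set to $1$ at indices $n\in F$ with $a_n=0$, so that $\supp\eta=F\in\cf$ and $\eta\in K$.
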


So we can see the compactness is expressed in terms of the properties of functions $\Phi(a)$: the functions are {\em almost constant} on certain sets. 

\begin{remark}
    The condition $(\ref{th:compact:b}.1)$ in Theorem \ref{th:compact:b} is satisfied  if and only if $A\subset \FIN(\cf)$ is bounded.
\end{remark}

\begin{remark}
By Lemma \ref{fin:cont:function:space} the compactness criteria given in Theorem \ref{th:compact:b}  may be enhanced a little: we may require that the finite cover $\{K_1,\ldots,K_N\}$ be composed of subsets $K_i\subset K$ open in the metric space $(K,d_\infty)$. 
\end{remark}

It would be good to have the criteria given in Theorem \ref{th:compact:b} translated into the properties of a family $\cf$. Unfortunately, we are not able to see a nice description in these terms. Maybe it is not that natural,  as the example of the known compactness criteria in $\ell^\infty$ (cf. Theorem \ref{th:old:linf} above) shows.

\section{Discussion about equinormed sets}

Since we are not able to give a criterion which is elegant enough in the case of general $\FIN(\cf)$ spaces, we decided to follow another route: to identify the spaces for which the compactness criteria are simplified. There is a natural candidate for such a perspective: the general compactness criteria require the set $A-A$ to be equinormed as a sufficient condition for compactness. In some cases though (e.g. space $\ell^1$, as opposed to $\ell^\infty$) it appears we may replace the equinormedness of $A-A$ with the equinormedness of $A$. This is much more elegant, as the compactness is described directly in terms of the set $A$. This section contains all we were able to find out while following this route.

It is a general fact that for a bounded subset $A\subset E$ of a normed space, being precompact is equivalent to the equinormedness of the set $A-A$. But this may not be easily replaced by the equinormedness of the set $A$ itself. There is a one way implication (cf \cite[Porposition 3.13]{GKM}) saying that each relatively compact set is equinormed, but the reverse implication holds only in some spaces. The example of such space is $\ell^1 = \EXH(\cf_{<\infty}) = \FIN(\cf_{<\infty})$, the example of the space where there exist equinormed sets $A$, which are not relatively compact is $c_0=\EXH(\cf_{\leq 1})$. We are going to say more about the situations where the equinormedness of $A$ is equivalent to the relative compactness of $A$. But first the general observation comes.

\begin{proposition}\label{prop:fin:exh:equinormed}
If $\FIN(\cf)\neq\EXH(\cf)$, then there exists such $A\subset\EXH(\cf)$ that $A$ is equinormed and $A$ is not relatively compact.    
\end{proposition}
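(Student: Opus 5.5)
The plan is to build $A$ from the truncations of a single element of $\FIN(\cf)\setminus\EXH(\cf)$. Fix $x\in\FIN(\cf)\setminus\EXH(\cf)$ and set
\[
A=\{a_m : m\in\bn\},\qquad a_m = x\cdot\chfun_{[1,m]} .
\]
Each $a_m$ lies in $c_{00}\subset\EXH(\cf)$. Moreover $\norm{a_m}_\cf\le\norm{x}_\cf$, so $A$ is bounded.

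The quantity $\norm{P_n(x)}_\cf=\norm{x\cdot\chfun_{[n+1,\infty)}}_\cf$ is nonincreasing in $n$. Since $x\notin\EXH(\cf)$, it does not tend to $0$, so there is $\delta>0$ with $\norm{P_n(x)}_\cf>\delta$ for all $n$. We will also use lower semicontinuity of the norm: for any $y$, $\norm{y\cdot\chfun_{[1,m]}}_\cf\nearrow\norm{y}_\cf$ as $m\to\infty$. This holds because each $F\in\cf$ is finite, so $\sum_{k\in F}|y_k|$ is eventually reached by the truncations.

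\emph{$A$ is not relatively compact.} For $k<m$ we have $\norm{a_m-a_k}_\cf=\norm{x\cdot\chfun_{(k,m]}}_\cf$. By lower semicontinuity this tends to $\norm{P_k(x)}_\cf>\delta$ as $m\to\infty$. Hence for each $k$ there is $m_k$ such that $\norm{a_m-a_k}_\cf>\delta/2$ for all $m\ge m_k$. Any subsequence of $(a_m)$ with infinitely many distinct terms (i.e. with indices tending to infinity) therefore fails to be Cauchy. Every sequence in $A$ either has such a subsequence or has a constant subsequence. So to conclude it remains to check that $A$ has no limit points outside itself. Any limit point $y$ of $A$ must be a limit of $a_{m_j}$ with $m_j\to\infty$. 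But such a sequence has no Cauchy subsequence, as just shown, so it cannot converge. Thus $A$ is closed, infinite and not compact, hence not relatively compact.

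\emph{$A$ is equinormed.} This is the step needing care. Equinormedness is taken with respect to the seminorms $\norm{\cdot}_\cb$ with $\cb\subset\cf$ finite. Fix $\varepsilon>0$. By lower semicontinuity choose $m_0$ with $\norm{a_{m_0}}_\cf>\norm{x}_\cf-\varepsilon/2$. Then choose a finite $\cb_0\subset\cf$ with $\norm{a_{m_0}}_{\cb_0}>\norm{x}_\cf-\varepsilon$. For each of the finitely many $m<m_0$, pick a single $F_m\in\cf$ with $\sum_{k\in F_m}|a_{m,k}|>\norm{a_m}_\cf-\varepsilon$. Put $\cb=\cb_0\cup\{F_m:m<m_0\}$, which is finite. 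For $m<m_0$ the required inequality $\norm{a_m}_\cf\le\varepsilon+\norm{a_m}_\cb$ is immediate. For $m\ge m_0$, the coordinates satisfy $|a_{m,k}|\ge|a_{m_0,k}|$ pointwise, so
\[
\norm{a_m}_\cb\ge\norm{a_{m_0}}_{\cb_0}>\norm{x}_\cf-\varepsilon\ge\norm{a_m}_\cf-\varepsilon .
\]
This proves equinormedness. The only subtle point is this uniform control of the tail of the family $A$, and it is exactly what the monotone convergence $\norm{a_m}_\cf\nearrow\norm{x}_\cf$ supplies.
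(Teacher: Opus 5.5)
Your proof is correct and follows essentially the paper's route: the same set $A=\{x\cdot\chfun_{[1,m]} : m\in\bn\}$ of truncations of a fixed $x\in\FIN(\cf)\setminus\EXH(\cf)$, with equinormedness obtained from one nearly norming set for the tail of $A$ plus finitely many extra sets for the first indices. For those first indices the paper uses a finite family representing all traces $F\cap[1,\max F_0]$, whereas you use one nearly optimal $F_m$ for each $m<m_0$.

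The other difference is how non-compactness is shown. The paper checks that $A-A$ is not equinormed and invokes the abstract criterion. You instead verify directly that no subsequence with indices tending to infinity is Cauchy, which is equally valid. Your detour through the closedness of $A$ is not needed: if $A$ were relatively compact, $(a_m)$ would already have to have a Cauchy subsequence.
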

\begin{proof}
Assume $x\in\FIN(\cf)\setminus\EXH(\cf)$. This means that there exist $c>0$ and a sequence $F_n\in\cf$ such that 
\[
\sum_{k\in F_n \cap [n+1,+\infty)} |a_k| \geq c.
\]     
Let $A=\{ x\cdot\chfun_{[1,n]} : n\in \bn\}.$ As we can see, $A$ is equinormed, since there exists such $F_0\in\cf$ that
\[
\sum_{k\in F_0} |x_k| \geq \Vert x\Vert_\cf-\varepsilon 
\]
for $\varepsilon\in(0,c)$.
When $n\geq \max F_0$, then
\[
\sum_{k\in F_0} |x_k\cdot\chfun_{[1,n]}(k)| = \sum_{k\in F_0} |x_k| \geq \Vert x\Vert_\cf-\varepsilon \geq  \Vert x\cdot\chfun_{[1,n]}\Vert_\cf-\varepsilon. 
\]
On the other hand, let us take
 such a finite family $\cb \subset \cf$ that each set $B_0 \in \{ F\cap [1,\max F_0] : F\in\cf \}$ is represented by some $B\in\cb$ such that $B_0 = B\cap [1,\max F_0]$. More formally:
 $$(\tau \subseteq [1,\max F_0] \land \, \exists T \in \mathcal{F}: \; \tau \subseteq T) \implies \exists B \in \mathcal{B}: \; \tau \subseteq B.$$ Then $\cb$ is a finite family and we have 
\[
\Vert x\cdot\chfun_{[1,n]}\Vert_{\cb} = \Vert x\cdot\chfun_{[1,n]}\Vert_{\cf}  \geq \Vert x\cdot\chfun_{[1,n]}\Vert_{\cf} - \varepsilon 
\]
for all $n\leq \max F_0$. This proves that $A$ is equinormed. But $A-A$ is not equinormed, since for any finite family $\cb\subset \cf$ and  $m>n>\max \bigcup\cb$, where $m$ is big enough there is $\Vert x\cdot\chfun_{[1,m]} - x\cdot\chfun_{[1,n]}\Vert_{\cb}=0$ and 
\[\Vert x\cdot\chfun_{[1,m]} - x\cdot\chfun_{[1,n]}\Vert_{\cf}
= \Vert x\cdot\chfun_{[n+1,m]}\Vert_{\cf} \geq c.\] 
\end{proof}

\begin{example}
The Proposition \ref{prop:fin:exh:equinormed} cannot be reversed, i.e. it may happen that in case when $\FIN(\cf)=\EXH(\cf)$ there may exist equinormed and non-compact sets. Let us consider $\cf = \{\{1\}\} \cup \{ A\subset [2,+\infty) : |A|<\infty \}$. Then
\[
\Vert x\Vert_\cf = \max\{|x_1|, \sum_{n=2}^{+\infty} |x_n|\}.
\]
Obviously $\EXH(\cf)=\FIN(\cf)$ and the set $A$ consisting of all points $x_n$, $n\geq 2$ given by
\[
x_{n,k} = \begin{cases} 1 & k\in\{1,n\} \\ 0 & {\text{otherwise}}
\end{cases}
\]
is equinormed, while $A-A$ is not equinormed.
\end{example}

This shows that the question: when the conditions 
\begin{itemize}
    \item bounded set $A$ is equinormed;  
    \item bounded set $A$ is such that $A-A$ is equinormed    
\end{itemize}
are equivalent? -- is nontrivial and relevant.
The first, very natural observation is as follows:

\begin{theorem}
\label{dirtheorem}
If $\mathcal{F}$ is directed (that is, $\forall D, B \in \mathcal{F} \; \exists C \in \mathcal{F}: \; D \cup B \subseteq C$), then the equinormedness of $A \subseteq X_\mathcal{F}$ implies the equinormedness of $A-A$.
\end{theorem}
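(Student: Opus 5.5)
The plan is to use directedness to turn equinormedness of $A$ into a uniform ``tail'' estimate outside a single set $C\in\cf$. That estimate then passes to differences by the triangle inequality, much like the implication $(ii)\Rightarrow(i)$ in Lemma \ref{lem:exh:compact:cond}. Throughout, equinormedness is with respect to the family $(\Vert\cdot\Vert_{\cb})_{\cb\subset\cf\wedge|\cb|<\infty}$.

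First I reduce to a single set. Fix $\varepsilon>0$. Equinormedness of $A$ gives a finite family $\cb\subset\cf$ with $\Vert a\Vert_\cf\leq\varepsilon+\Vert a\Vert_\cb$ for all $a\in A$. Since $\cf$ is directed and $\cb$ is finite, an easy induction gives $C\in\cf$ with $\bigcup\cb\subset C$. Then $\Vert a\Vert_\cb\leq\sum_{k\in C}|a_k|=\Vert a\Vert_{\{C\}}$, so
\[
\Vert a\Vert_\cf\leq \varepsilon+\sum_{k\in C}|a_k| \quad\text{for all } a\in A.
\]

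The key step extracts a tail bound from this inequality. Take any $F\in\cf$ and, by directedness, some $D\in\cf$ with $F\cup C\subset D$. Since $C\subset D$,
\[
\sum_{k\in C}|a_k|+\sum_{k\in D\setminus C}|a_k|=\sum_{k\in D}|a_k|\leq\Vert a\Vert_\cf\leq\varepsilon+\sum_{k\in C}|a_k|.
\]
Cancelling the finite sum over $C$, and using $F\setminus C\subset D\setminus C$, gives
\[
\sum_{k\in F\setminus C}|a_k|\leq\varepsilon \quad\text{for all } F\in\cf \text{ and all } a\in A.
\]
This is where directedness is genuinely needed; without it $F$ and $C$ need not sit inside a common member of $\cf$, as the example preceding the theorem shows.

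Finally I pass to differences. For $a,b\in A$ and any $F\in\cf$,
\[
\sum_{k\in F}|a_k-b_k|\leq\sum_{k\in F\cap C}|a_k-b_k|+\sum_{k\in F\setminus C}|a_k|+\sum_{k\in F\setminus C}|b_k|\leq \Vert a-b\Vert_{\{C\}}+2\varepsilon.
\]
Taking the supremum over $F\in\cf$ yields $\Vert a-b\Vert_\cf\leq 2\varepsilon+\Vert a-b\Vert_{\{C\}}$. Since $\varepsilon>0$ was arbitrary, $A-A$ is equinormed with the index $\{C\}$. The argument never uses that $a\in\EXH(\cf)$, so it works equally well in $\FIN(\cf)$. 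The main obstacle is the tail step, and it is resolved by the cancellation trick above.
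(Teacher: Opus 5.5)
Your proof is correct and follows essentially the same route as the paper. The key step, using directedness to put a set $F\in\mathcal{F}$ together with the finite core into one member of $\mathcal{F}$ and cancelling to get a tail bound $\le\varepsilon$, is exactly the paper's estimate $\|x\|_{\mathcal{B}}+\sum_{n\in S}|x_n|\le\|x\|_{\mathcal{F}}\le\|x\|_{\mathcal{B}}+\varepsilon$; the passage to $A-A$ by the triangle inequality is the same as in the paper. The one difference is a small streamlining: you use directedness once more to absorb $\bigcup\mathcal{B}$ into a single $C\in\mathcal{F}$ and take the index $\{C\}$, whereas the paper builds a finite family $\mathcal{B}'$ of representatives of all subsets of $\bigcup\mathcal{B}$ that lie in some member of $\mathcal{F}$.
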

\begin{proof}
$A$ being equinormed means that, for any $\varepsilon>0$, there exists a finite subfamily $\mathcal{B} \subset \mathcal{F}$ such that $\forall x \in A: \; ||x||_\mathcal{F}-||x||_\mathcal{B}<\varepsilon$. It is possible (similarly as we did in Proposition \ref{prop:fin:exh:equinormed}) to find another finite subfamily $\mathcal{B}' \subset \mathcal{F}$ such that $$(\tau \subseteq \bigcup_\mathcal{B} \land \, \exists T \in \mathcal{F}: \; \tau \subseteq T) \implies \exists T' \in \mathcal{B}': \; \tau \subseteq T'.$$ In simple words, the sum over $\mathcal{B}$ is a finite subset of $\bn$, collecting indices that are relevant in any manner for calculating the seminorm. We can choose all sets of these indices that exist as subsets in some set from $\mathcal{F}$, and put one representative for every such set in $\mathcal{B}'$ -- this subfamily is finite.
\newline
\newline
Let us take under consideration any $z \in A-A$. Since the norm $||z||_\mathcal{F}$ is defined as a supremum, without doubt there exists a set $R \in \mathcal{F}$ such that $||z||_\mathcal{F} - \sum_{n \in R} |z_n| < \varepsilon$. Our goal is to prove that $\mathcal{B}'$ is an appropriate seminorm for $A-A$, so for this purpose we can write $R$ as a sum of two separate sets $W \cup S$, where $W=R \cap \bigcup_\mathcal{B}$ and $S=R \setminus \bigcup_\mathcal{B}$. What we need is to bound $\sum_{n \in S} |z_n|$ with respect to $\varepsilon$.
For this purpose, naturally, we can exploit the fact that $z=x-y$ for some $x, y \in A$, and also $\sum_{n \in S} |z_n| \le \sum_{n \in S} |x_n| + \sum_{n \in S} |y_n|$. Therefore it is sufficient to bound one of these sums (and without loss of generality, both) with respect
to $\varepsilon$.
\newline
\newline
Let $P \in \mathcal{B}$ be the set which realises the supremum for $||x||_\mathcal{B}$. By assumption that $\mathcal{F}$ is directed, there exists $Q \in \mathcal{F}$ such that $P \cup R \subseteq Q$, so in particular $P\cup S\subseteq Q$. Hence, $$||x||_\mathcal{B}+\sum_{n \in S} |x_n| = \sum_{n \in P} |x_n|+\sum_{n \in S} |x_n| \le \sum_{n \in Q} |x_n| \le ||x||_\mathcal{F} \le ||x||_\mathcal{B}+\varepsilon,$$  so that indeed $\sum_{n \in S} |x_n| \le \varepsilon$. Finally, $$||z||_{\mathcal{B}'} \ge \sum_{n \in W} |z_n| \ge \sum_{n \in R} |z_n|-2\varepsilon > ||z||_\mathcal{F}-3\varepsilon.$$ Given that, for any $\varepsilon>0$, there exists a finite subfamily $\mathcal{B}' \subset \mathcal{F}$ such that $\forall z \in A-A: \; ||x||_\mathcal{F}-||x||_{\mathcal{B}'}<3\varepsilon$, $A-A$ is equinormed. This completes the proof.
\end{proof}

\begin{remark}
The directedness of the family $\cf$ is actually the very strong assumption. The only directed and hereditary family is the one given by all finite subsets of $\bn$: all directed families $\cf$ have the same hereditary closure equal to $\cf_{<\infty}$, hence for each such family the induced norm is equal to the norm of $\ell^1$. Still we decided to present this fact as it is a natural starting point for further investigations.
\end{remark}

This theorem can be made stronger by loosening the condition that $\mathcal{F}$ should be directed. Because this point discusses only the underlying structure of the combinatorial Banach space $\FIN(\cf)$, and not its contents, it demonstrates why these spaces are called \emph{combinatorial} and thus reveals an intriguing connection between functional analysis and combinatorics. The stronger theorem will be presented in the continuation of this section and the combinatorial considerations in the next section. At present, please note that the following definition is purely set-theoretical in its heart, spanning a sequence over $C$ is only accidental to it: 

\begin{definition}[Covering by weight]\label{def:cov:by:weight}
For any $0 \le \alpha \le 1$ and any family $\mathcal{E}(C)$ of nonempty subsets of a set $C$, we will say that a family $\mathcal{E}(C)$ belongs to $W_\alpha(C)$ (or \emph{covers at least $\alpha$ of $C$ by weight}) when, for any real sequence $x$ indexed over $C$, $$\exists C' \in \mathcal{E}(C) \; \sum_{i \in C'} |x_i| \ge \alpha\sum_{i \in C} |x_i|.$$
\end{definition}

While for the definition to work correctly it is enough to assume that $C$ is countable and the sum of $x$ over $C$ is finite, in all further considerations we will use finite sets $C$.

\begin{remark}
Please note that this notion is, in a way, monotone with respect to $\alpha\in[0,1]$:
\[
\alpha'\leq\alpha \implies W_\alpha(C)\subseteq W_{\alpha'}(C).
\]
We may also easily observe that if $\mathcal{E}(C)\in W_1(C)$, then $C\in\mathcal{E}(C)$ and any nonempty family $\mathcal{E}(C)$ belongs to $W_0(C)$.
\end{remark}

\begin{example}
Assume $C\subset\bn$ satisfies $\#C=N$ and 
\[
\mathcal{E}(C) = \{ B\subset C : \#B = N-1 \}.
\]
Then $\mathcal{E}(C)$ covers at least $\alpha = 1 - \frac{1}{N}$ of $C$ by weight.
\end{example}

Now we can proceed to discuss a more complex definition that should allow to classify the combinatorial Banach spaces almost completely with respect to whether the equinormedness of $A \subset \FIN(\cf)$ is a sufficient condition for compactness: 

\begin{definition}[Classes of directedness]
For a family $\mathcal{F}$ of finite sets in $\bn$ and any $N \in \bn$, let us denote the family of all nonempty sets of the form $$B_N=B \cap [N+1,+\infty): B \in \mathcal{F}$$ as $\mathcal{F}_N$. $\mathcal{F}$ will be said to belong to the \emph{$(\lambda, \gamma)$-class of directedness} when $$\forall D \in \mathcal{F} \; \exists N \in \bn \; \forall B \in \mathcal{F}_N:$$$$\exists \mathcal{E}(D) \in W_{1-\lambda}(D), \mathcal{E}(B) \in W_\gamma(B) \; \forall D' \in \mathcal{E}(D), B' \in \mathcal{E}(B) \; \exists C \in \mathcal{F}:$$$$D' \cup B' \subseteq C.$$ 
\end{definition}

From the fact that $D' \in \mathcal{E}(D), B' \in \mathcal{E}(B)$ are both nonempty it follows that $D \cap C$ and $B \cap C$ are also both nonempty. This allows to avoid the situation that every family would belong to the $(0, 0)$-class.

Ideologically speaking, the point of this definition is to weaken the notion of directedness while retaining its aspects that are central for our considerations. Instead of demanding that there exists a set $C \in \mathcal{F}$ containing both $D$ and $B$, we allow that it contains \emph{almost all} of any chosen set $D$ still along with \emph{a significant fragment} of another sufficiently distant set $B$. One can easily see that the behaviour of classes of directedness is uniform, in that if $\mathcal{F}$ belongs to the $(\lambda, \gamma)$-class, it also belongs to the $(\lambda', \gamma')$-class for any $\lambda'\ge\lambda, \gamma'\le\gamma$.

\begin{theorem}
If $\mathcal{F}$ belongs to the $(0,c)$-class of directedness for some $c>0$, then the equinormedness of $A \subseteq \FIN(\mathcal{F})$ implies the equinormedness of $A-A$.
\end{theorem}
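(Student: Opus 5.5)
The plan is to follow the proof of Theorem \ref{dirtheorem}. The new feature is how the "far" part of a near-optimal set is controlled: directedness is replaced by the $(0,c)$-condition. That condition costs a factor $1/c$, and it only applies beyond a threshold $N$, which must be chosen uniformly.

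\emph{Setting up the threshold.} Fix $\varepsilon>0$. By equinormedness of $A$ choose a finite $\mathcal{B}\subset\mathcal{F}$ with $\Vert x\Vert_\mathcal{F}-\Vert x\Vert_\mathcal{B}<\varepsilon$ for all $x\in A$. For each of the finitely many $D\in\mathcal{B}$, the $(0,c)$-class condition provides a threshold $N_D$. Let
\[
N'=\max\Bigl(\{N_D : D\in\mathcal{B}\}\cup\bigl\{\max \textstyle\bigcup\mathcal{B}\bigr\}\Bigr).
\]
As in Proposition \ref{prop:fin:exh:equinormed}, let $\mathcal{B}'\subset\mathcal{F}$ be a finite family containing, for every nonempty trace $F\cap[1,N']$ with $F\in\mathcal{F}$, some member $B\in\mathcal{B}'$ with $F\cap[1,N']\subseteq B$. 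This is possible because there are only finitely many such traces. We will show that $\mathcal{B}'$ witnesses equinormedness of $A-A$ with error $\varepsilon(1+2/c)$.

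\emph{Key estimate on tails.} For $x\in A$ and any $S\in\mathcal{F}_{N'}$ (so $S=T\cap[N'+1,\infty)$ for some $T\in\mathcal{F}$), we claim $\sum_{n\in S}|x_n|\le\varepsilon/c$. The argument runs as follows.
\begin{itemize}
\item Let $P\in\mathcal{B}$ realise $\Vert x\Vert_\mathcal{B}$; it exists because $\mathcal{B}$ is finite.
\item Since $N'\ge N_P$, the definition with $\lambda=0$ gives $\mathcal{E}(P)\in W_1(P)$, so $P\in\mathcal{E}(P)$, as noted in the remark after Definition \ref{def:cov:by:weight}.
\item It also gives $\mathcal{E}(S)\in W_c(S)$. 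Applying the definition of $W_c(S)$ to the sequence $x|_S$ yields $S'\in\mathcal{E}(S)$ with $\sum_{S'}|x_n|\ge c\sum_S|x_n|$.
\item There is then $C\in\mathcal{F}$ with $P\cup S'\subseteq C$. Moreover $P\subseteq[1,N']$ and $S'\subseteq[N'+1,\infty)$ are disjoint.
\end{itemize}
Hence
\[
\Vert x\Vert_\mathcal{B}+c\sum_{n\in S}|x_n|\le\sum_{n\in P}|x_n|+\sum_{n\in S'}|x_n|\le\sum_{n\in C}|x_n|\le\Vert x\Vert_\mathcal{F}<\Vert x\Vert_\mathcal{B}+\varepsilon,
\]
which gives the claim.

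\emph{Conclusion.} Take $z=x-y\in A-A$ and $R\in\mathcal{F}$ with $\sum_R|z_n|>\Vert z\Vert_\mathcal{F}-\varepsilon$. Split $R=W\cup S$ with $W=R\cap[1,N']$ and $S=R\cap[N'+1,\infty)$. This cut is at $N'$, not at $\bigcup\mathcal{B}$ as in Theorem \ref{dirtheorem}, because the $(0,c)$-condition only controls sets lying past the threshold. If $S\neq\emptyset$, then $S\in\mathcal{F}_{N'}$, so
\[
\sum_S|z_n|\le\sum_S|x_n|+\sum_S|y_n|\le 2\varepsilon/c.
\]
The part $W$ lies inside some member of $\mathcal{B}'$, so
\[
\Vert z\Vert_{\mathcal{B}'}\ge\sum_W|z_n|>\Vert z\Vert_\mathcal{F}-\varepsilon-2\varepsilon/c,
\]
which proves equinormedness of $A-A$. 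If $W=\emptyset$, the same bound still holds since $\Vert z\Vert_{\mathcal{B}'}\ge 0$.

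The main obstacle is the tail estimate. It needs $N'$ uniform in $x$, which is why we take the maximum over the finitely many $D\in\mathcal{B}$. It also needs the sets $P$ and $S'$ to be disjoint, which is guaranteed by taking $N'\ge\max\bigcup\mathcal{B}$; this is what lets the two sums add up inside $C$ without double counting.
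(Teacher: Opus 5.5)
Your proposal is correct and is essentially the paper's own proof: the threshold is taken as the maximum of the $N_D$ over the finite family $\mathcal{B}$, a near-optimal $R$ is split at that threshold, the tail is bounded by $\varepsilon/c$ via $P\cup S'\subseteq C$, and $\mathcal{B}'$ is built from the traces on $[1,N]$. Your extra requirement $N'\ge\max\bigcup\mathcal{B}$ makes explicit the disjointness of $P$ and $S'$ that the paper's inequality uses tacitly; the one step both versions leave implicit is that the $(0,c)$-condition for $P$ survives raising the threshold from $N_P$ to $N'$ when $\mathcal{F}$ is not hereditary, which is routine: apply $W_c$ to $S_0=T\cap[N_P+1,\infty)\in\mathcal{F}_{N_P}$ with the sequence $x\cdot\chfun_S$.
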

\begin{proof}
Assume $A\subseteq\FIN(\cf)$ is equinormed (as usual with respect to the family of seminorms given by the finite subsets of $\cf$).
For any $F \in \mathcal{F}$ we can choose such $N \in\bn$ that for any $B \in \mathcal{F}_N$ there exists $C \in \mathcal{F}$ ($F \subset C$) containing at least $c$ of $B$ by weight. Repeating the proof of Theorem \ref{dirtheorem}, we have a finite subfamily $\mathcal{B} \subset \mathcal{F}$ such that $\forall x \in A: \; ||x||_\mathcal{F}-||x||_\mathcal{B}<\varepsilon$. We can adopt as the new $N$ the largest of values of $N$ for $F \in \mathcal{B}$. We change the way $R$ is split into two sets so that $W=R \cap [1,N]$ and $S=R \cap [N+1,+\infty)$, intending to show that the sum $\sum_{n \in S} |x_n|$ can be bounded with respect to some function of $\varepsilon$.
\newline
\newline
Like previously, let $P \in \mathcal{B}$ be the set which realises the supremum for $||x||_\mathcal{B}$. By assumption that $\mathcal{F}$ belongs to the $(0,c)$-class of directedness, there exists $Q \in \mathcal{F}$ and $S' \subset S$ such that $P \cup S' \subseteq Q$ and $\sum_{i \in S'} |x_i| \ge c\sum_{i \in S} |x_i|$. Hence, $$||x||_\mathcal{B}+c\sum_{n \in S} |x_n| \le \sum_{n \in P} |x_n|+\sum_{n \in S'} |x_n| \le \sum_{n \in Q} |x_n| \le ||x||_\mathcal{F} \le ||x||_\mathcal{B}+\varepsilon,$$ so that indeed $\sum_{n \in S} |x_n| \le \frac{\varepsilon}{c}$. Defining the finite subfamily $\mathcal{B}' \subset \mathcal{F}$ on the basis of $[1,N]$ rather than $\bigcup_\mathcal{B}$, we complete the proof.
\end{proof}

\begin{theorem}
If $\mathcal{F}$ belongs to the classes of directedness  $(\lambda, c')$ arbitrarily close to  $(0,c)$ (that is, there exists a sequence of such classes to which $\mathcal{F}$ belongs and its limit is by coordinates $(0,c)$), then the equinormedness of $A \subseteq \FIN(\cf)$ implies the equinormedness of $A-A$.
\end{theorem}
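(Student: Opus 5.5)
We follow the scheme of the proof of the previous theorem, now with an approximate version of the covering step. Fix $\varepsilon>0$ and choose a class $(\lambda,c')$ from the given sequence with $\lambda$ small and $c'\ge c/2>0$; how small $\lambda$ must be will be fixed at the end. Since $A$ is equinormed, there is a finite subfamily $\cb\subset\cf$ with $\norm{x}_\cf-\norm{x}_\cb<\varepsilon$ for all $x\in A$. For each $D\in\cb$ the $(\lambda,c')$-directedness yields some $N_D$. Put $N=\max_{D\in\cb}N_D$, which is legitimate because enlarging $N$ only shrinks $\cf_N$. Let $\cb'$ be the finite family built from $[1,N]$ exactly as in the proof of Theorem \ref{dirtheorem}: for each $\tau\subseteq[1,N]$ contained in some member of $\cf$, it contains one set $T'\in\cf$ with $\tau\subseteq T'$. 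This $\cb'$ is the candidate seminorm witnessing equinormedness of $A-A$.

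Take $z=x-y\in A-A$ and choose $R\in\cf$ with $\norm{z}_\cf-\sum_{n\in R}|z_n|<\varepsilon$. Split $R=W\cup S$ with $W=R\cap[1,N]$ and $S=R\cap[N+1,\infty)$. Then $\norm{z}_{\cb'}\ge\sum_{W}|z_n|$, so it suffices to show that $\sum_{S}|x_n|$ (and likewise for $y$) is small. If $S=\emptyset$ there is nothing to do; otherwise $S\in\cf_N$.

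The key estimate is as follows. Let $P\in\cb$ realise $\norm{x}_\cb$. Apply the class property with $D=P$ and $B=S$, using the weight sequence $|x_n|$. This gives $\mathcal{E}(P)\in W_{1-\lambda}(P)$ and $\mathcal{E}(S)\in W_{c'}(S)$, from which we pick $P'$ with $\sum_{P'}|x_n|\ge(1-\lambda)\sum_P|x_n|$ and $S'$ with $\sum_{S'}|x_n|\ge c'\sum_S|x_n|$, together with $Q\in\cf$ containing $P'\cup S'$. Since $P'\subseteq P\subseteq[1,N]$ is disjoint from $S'$, we get
\[
(1-\lambda)\norm{x}_\cb+c'\sum_{n\in S}|x_n|\le\sum_{n\in Q}|x_n|\le\norm{x}_\cf\le\norm{x}_\cb+\varepsilon .
\]
Hence $\sum_S|x_n|\le(\varepsilon+\lambda\norm{x}_\cb)/c'\le(\varepsilon+\lambda M)/c'$, where $M=\sup_{x\in A}\norm{x}_\cf$. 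Boundedness of $A$ matters here: an equinormed set in this sense need not be bounded a priori, so we assume it as in the preceding criteria, or note that equinormedness with a finite $\cb$ controls norms only up to $\varepsilon$.

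The main obstacle is the order of quantifiers. The set $\cb$ and the bound $M$ depend on $\varepsilon$, and $\lambda$ must then be chosen so that $\lambda M\le\varepsilon$. This works because $M$ is independent of $\varepsilon$: we fix $M$ first, then for each $\varepsilon$ pick from the sequence a class with $\lambda\le\varepsilon/M$ and $c'\ge c/2$, and only then choose $\cb$, $N$ and $\cb'$. This yields $\norm{z}_{\cb'}\ge\norm{z}_\cf-\varepsilon-2(2\varepsilon)/(c/2)=\norm{z}_\cf-(1+8/c)\varepsilon$. Since $\varepsilon$ is arbitrary, $A-A$ is equinormed.
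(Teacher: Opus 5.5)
Your argument is correct and is essentially the paper's own proof: it uses the same finite $\cb$, the same split $R=W\cup S$ and the same key inequality $(1-\lambda)\norm{x}_\cb+c'\sum_{n\in S}|x_n|\le\norm{x}_\cb+\varepsilon$, and you are more explicit than the paper about needing $A$ bounded to absorb $\lambda\norm{x}_\cb$, although even that can be avoided: taking $\lambda<1/|D|$ and testing $W_{1-\lambda}(D)$ with constant weights forces $D\in\mathcal{E}(D)$, which puts $\cf$ in the $(0,c'')$-class for every $c''\in(0,c)$, so the previous theorem applies directly. Two minor repairs are needed: take also $N\ge\max\bigcup\cb$, since otherwise $P\subseteq[1,N]$ (hence $P'\cap S'=\emptyset$) is not guaranteed; and your claim that $\cf_N$ shrinks as $N$ grows can fail for non-hereditary $\cf$, though the class property itself still passes to larger $N$ (intersect the members of $\mathcal{E}(\tilde B)$, where $\tilde B=R\cap[N_D+1,+\infty)$, with $[N+1,+\infty)$).
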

\begin{proof}
Let us fix $\lambda\in(0,1)$ and $\delta\in(0,c)$ such that $\cf$ belongs to $(\lambda,c-\delta)$ class of directedness. Then for any $F \in \mathcal{F}$ we can choose such $N \in\bn$ that for any $B \in \mathcal{F}_N$ there exists $C \in \mathcal{F}$ containing at least $1-\lambda$ of $F$ and at least $c-\delta$ of $B$ by weight. Proceeding to the crucial inequality same as above, we arrive at $$(1-\lambda)||x||_\mathcal{B}+(c-\delta)\sum_{n \in S} |x_n| \le ||x||_\mathcal{B}+\varepsilon,$$ that is $\sum_{n \in S} |x_n| \le \frac{\lambda||x||_\mathcal{B}+\varepsilon}{c-\delta}$. Thus, choosing sufficiently small values of $\lambda$ and $\delta$, we can uniformly restrict the sum over $S$ dependent on $\varepsilon$ -- say, $\sum_{n \in S} |x_n| \le \frac{2\varepsilon}{c}$ -- what was to be proven.
\end{proof}

\begin{theorem}
If $\mathcal{F}$ does not belong to the $(\lambda, 0)$-class of directedness for some $\lambda>0$, then the equinormedness of $A \subseteq \FIN(\cf)$ does not imply the equinormedness of $A-A$.
\end{theorem}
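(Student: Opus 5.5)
The plan is to unwind the negation of the $(\lambda,0)$-class condition into a single "bad" set $D\in\cf$ together with far-away points that are only weakly compatible with $D$. From these I will build a set $A\subset c_{00}\subset\FIN(\cf)$ that mimics the Example above. In that example $A$ is equinormed because the norm of each element is realised on one fixed set $D$, while $A-A$ is not equinormed because the differences live on points escaping to infinity.

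\emph{Step 1 (negation).} If $\cf$ is not in the $(\lambda,0)$-class, there is $D\in\cf$ such that for every $N$ some $B\in\cf_N$ satisfies the following. For all $\mathcal{E}(D)\in W_{1-\lambda}(D)$ and all nonempty $\mathcal{E}(B)$ (every nonempty family lies in $W_0(B)$), there are $D'\in\mathcal{E}(D)$ and $B'\in\mathcal{E}(B)$ with no $C\in\cf$ containing $D'\cup B'$.

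I will specialise to $\mathcal{E}(B)=\{\{b\}\}$ for a fixed $b=b_N\in B\subset[N+1,\infty)$, and to
\[
\mathcal{E}(D)=\{D'\subseteq D : D'\neq\emptyset,\ \exists C\in\cf\ D'\cup\{b_N\}\subseteq C\}.
\]
This is exactly the family of all $b_N$-compatible subsets, so the negation says it is not in $W_{1-\lambda}(D)$. (If it is empty, the argument below only gets easier.) Hence there is a weight $w_N\ge 0$ on $D$ with $\sum_D w_N=1$ such that every $b_N$-compatible $D'$ has $w_N(D')<1-\lambda$.

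\emph{Step 2 (uniformising the weights).} Since $D$ is finite, the $w_N$ lie in a compact simplex, and $D$ has only finitely many subsets. I will therefore pass to a subsequence along which the following hold:
\begin{itemize}
\item $w_N\to w$;
\item the $b_N$ are distinct and tend to infinity (possible since $b_N\ge N+1$);
\item for all large $N$, every $b_N$-compatible $D'$ satisfies $w(D')<1-\lambda/2$.
\end{itemize}
For the last point, each $D'\subseteq D$ that is compatible with infinitely many $b_N$ satisfies $w(D')\le 1-\lambda$ in the limit, and the $\lambda/2$ margin absorbs the small error for large $N$. This limiting step is the main delicate point: strict inequalities can degrade in the limit, and that is why I trade $\lambda$ for $\lambda/2$.

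\emph{Step 3 (the set $A$).} Fix $0<t<\lambda/2$ and put $x^N=w\cdot\chfun_D+t\,\chfun_{\{b_N\}}$, with $N$ large so that $b_N\notin D$. I compute $\Vert x^N\Vert_\cf$ by cases on $C\in\cf$:
\begin{itemize}
\item if $b_N\notin C$, then $\sum_C|x^N_k|\le w(D)=1$;
\item if $b_N\in C$, then $C\cap D$ is either empty or $b_N$-compatible, so $\sum_C|x^N_k|< t+1-\lambda/2<1$.
\end{itemize}
Since $D\in\cf$, the value $1$ is attained. Hence $\Vert x^N\Vert_\cf=1=\Vert x^N\Vert_{\{D\}}$, so $A=\{x^N\}$ is bounded and equinormed with the single seminorm $\Vert\cdot\Vert_{\{D\}}$.

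On the other hand, $x^N-x^M=t(\chfun_{\{b_N\}}-\chfun_{\{b_M\}})$. Because $\cf$ covers $\bn$, this difference has $\cf$-norm at least $t$. For any finite $\cb\subset\cf$, the set $\bigcup\cb$ is finite, so for $N,M$ large enough $\Vert x^N-x^M\Vert_\cb=0$. Thus $A-A$ is not equinormed, which proves the theorem.
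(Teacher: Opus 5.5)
Your proof is correct and follows the paper's strategy. You put unit weight on the bad set $D$ and a small extra mass ever further out, so the norm is still attained on $D$ and $A$ is equinormed via $\Vert\cdot\Vert_{\{D\}}$, while $A-A$ fails to be equinormed because the differences live only on the escaping points. Your two refinements supply details the paper's sketch leaves implicit: concentrating the far mass at a single point $b_N$ (so that the negated class condition can be applied with $\mathcal{E}(B)=\{\{b_N\}\}$), and passing to a limit weight $w$ so that $x^N-x^M$ vanishes on $D$.
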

\begin{proof}
In this situation, there exists a set $F \in \mathcal{F}$ such that for any $N \in\bn$ we can find nonempty $B \in \mathcal{F}_N$ with the property that any  $C \in \mathcal{F}$ ($B \cup C \neq \emptyset$) does not contain $1-c$ of $F$ by weight. Therefore,  we can assign values to elements of a sequence $x$ in such a manner that $\sum_{i \in F} |x_i|=1$, $\sum_{i \in B} |x_i|=c$ and $||x||_\mathcal{F}=1$. Composing the set $A$ from such sequences with increasingly distant sets $B$, we assure that $A$ is equinormed but $A-A$ is not. 
\end{proof}

\noindent
\textbf{Open question}: what can be said about the relation between the equinormedness of $A \subseteq \FIN(\cf)$ and the equinormedness of $A-A$ when $\mathcal{F}$ belongs to the classes of directedness arbitrarily close to $(0, 0)$?

\begin{example}
We will denote the interval $[10^{i-1}+1, 10^i]$ as $D_i$; as an exception, let $D_1=[1,10]$. Let $\mathcal{D}ec$ be a family of finite sets in $\bn$ which for some $i$ include as a subset the whole interval $D_i$ (called its \emph{principal interval}) and, for finitely many values of $j \ge 2i$, $\frac{1}{9}$ of the interval $D_j$ (in other words, $10^{j-1}$ elements from between $10^{j-1}+1$ and $10^j$). Clearly, $\mathcal{D}ec$ is not directed, as a sum of two sets from it can easily contain two whole intervals, which is never possible for a single set. However, for any given $D \in \mathcal{D}ec$ with a principal interval $D_i$, we can find such natural $k$ that $k \ge 2i$ and also $D_k$ lies further than any of the intervals featured in $D$; let $N=10^k+1$. Now, for any $B \in \mathcal{D}ec_N$, there exists some $C \in \mathcal{D}ec$ such that $D \subset C$, $C$ shares with $B$ all its secondary intervals, and $C$ contains any chosen $\frac{1}{9}$ of $B$'s principal interval. From here it follows that $C$ contains at least $\frac{1}{9}$ of $B$ by weight, and hence $\mathcal{D}ec$ belongs to the $(0, \frac{1}{9})$-class of directedness. The equinormedness of $A \subseteq \FIN(\mathcal{D}ec)$ implies the equinormedness of $A-A$.
\end{example}

Given that in $\mathcal{D}ec$ there exists a set containing the whole interval $D_1$ and any chosen $\frac{1}{9}$ of as many next intervals as one wishes, for any $x \in \FIN(\mathcal{D}ec)$ there occurs $||x||_{\mathcal{D}ec} \ge \frac{1}{9}\sum_{i=1}^\infty |x_i|$. Thus the combinatorial norm of $\mathcal{D}ec$ is equivalent to the norm of $\ell_1$.

\begin{example}
Let $\mathcal{D}ed$ be a family of finite sets in $\bn$ which for some $i$ include as a subset the whole interval $D_{i_0}$ (called its \emph{principal interval}) and, for finitely many values of $i_{k+1} \ge 2i_k$, $\frac{1}{9}$ of the interval $D_{i_{k+1}}$ (in other words, $10^{i_{k+1}-1}$ elements from between $10^{i_{k+1}-1}+1$ and $10^{i_{k+1}}$). $\mathcal{D}ed$ is not directed for the exact same reason as $\mathcal{D}ec$. However, for any given $D \in \mathcal{D}ed$ with a principal interval $D_{i_0}$ and secondary intervals numbered to $D_{i_m}$, we can pick $N=10^{2i_m}+1$. Now, just like in case of $\mathcal{D}ec$, for any $B \in \mathcal{D}ed_N$, there exists some $C \in \mathcal{D}ed$ such that $D \subset C$, $C$ shares with $B$ all its secondary intervals, and $C$ contains any chosen $\frac{1}{9}$ of $B$'s principal interval. From here it follows that $C$ contains at least $\frac{1}{9}$ of $B$ by weight, and hence $\mathcal{D}ed$ belongs to the $(0, \frac{1}{9})$-class of directedness. The equinormedness of $A \subseteq \FIN(\mathcal{D}ed)$ implies the equinormedness of $A-A$.
\end{example}

For any natural $c$, let the sequence $x(c)$ consist of $\frac{1}{c}$ spread evenly over each of the intervals $D_c$ to $D_{2c-1}$. Then $\sum_{i=1}^\infty |x(c)_i|=1$ but $||x||_{\mathcal{D}ed}=\frac{1}{c}$. Thus the combinatorial norm of $\mathcal{D}ec$ is not equivalent to the norm of $\ell_1$.
\newline\newline
This is an important observation that in case of combinatorial Banach spaces that are not yielded by directed set families but still have the property that a bounded and equinormed set in them is relatively compact, their norms can but do not have to be equivalent to the norm of $\ell_1$.

\section{Combinatorics}

The beginning of our combinatorial pursuit is to find an alternative language and see the inner structure of combinatorial spaces as something else than the families of subsets of $\bn$. It is known that the structure of some of these spaces can be expressed as a graph (i.e. the approach of mapping the cliques in a possibly infinite graph to the subsets forming a combinatiorial space as described in \cite{JachimekPhD}). More generally though, the natural language to describe a set with a family of its subsets in terms of combinatorics and graph theory is by a hypergraph.

\begin{definition}[Hypergraph]
An (undirected) \emph{hypergraph} is a pair $(V, E)$ where $V$ is a set whose elements are called \emph{vertices} and $E$ is a set of subsets of $V$ whose elements are called \emph{hyperedges}. Therefore, it is a generalization of a graph in which an edge is permitted to connect any number of vertices.
\end{definition}

\begin{definition}[Incidence matrix]
An incidence matrix uniquely representing a hypergraph with vertices $v_1, ..., v_n$ and hyperedges $e_1, ..., e_m$ is a $n \times m$ matrix with ones at the cell $(i, j)$ if $v_i$ is connected by the edge $e_j$ and zeros otherwise.
\end{definition}

\begin{definition}[Dual of a hypergraph]
The dual of a hypergraph represented by an incidence matrix $H$ is the hypergraph represented by the incidence matrix $H^T$.
\end{definition}

It is necessary to know that in the abstract structure of hypergraphs there exists a deep symmetry between the vertices and the hyperedges, as demonstrated by the following observation:

\begin{remark}
Any hypergraph can be equivalently described as a bipartite graph with the vertices on one side representing the original vertices, the vertices on the other side representing the hyperedges, and the edges denoting which vertices are connected by a given hyperedge. The dual of any given hypergraph is described by its same bipartite graph with the roles of its sides reversed.
\end{remark}

Many classical problems from the world of graph theory have a natural extension to hypergraphs. For us this is relevant because we are interested in finding analogies between the concepts related to solving these problems and the classes of combinatorial spaces. First and foremost, these topics worth discussing are matching and vertex cover in hypergraphs, because of their noted connections to the set cover problem and by extension to properties of set families.

\begin{definition}[Matching]
A \emph{matching} in a hypergraph is a set of hyperedges such that no two of them have a vertex they would both contain (they are all disjoint).
\end{definition}

\begin{definition}[Vertex cover]
A \emph{vertex cover} in a hypergraph is a set of vertices such that every hyperedge contains at least one of them.
\end{definition}

\begin{definition}[Set cover]
A \emph{set cover} in a family of subsets of a given set is a collection of these subsets such that their union is the whole set. Equivalently, a set cover in a hypergraph is a set of hyperedges such that every vertex is contained in at least one of them.
\end{definition}

The most important characterization of a hypergraph with regard to any of these concepts is its so-called \emph{number}, which denotes the smallest or largest (as relevant) possible cardinality of a set from the definition. Thus the matching number of a hypergraph is the maximal size of a set of its pairwise disjoint hyperedges, the set cover number of a hypergraph is the minimal size of a set of its hyperedges covering all vertices, etc.

These concepts have their \emph{fractional} variants, where the modification is that instead of each vertex or edge being included or not in the configuration, they have weights from the $[0,1]$ interval assorted to them, while keeping the analogous restrictions on their sums. It is easiest to describe them in the language of incidence matrices:

\begin{definition}[Fractional matching]
A \emph{fractional matching} in a hypergraph represented by an incidence matrix $H_{n,m}$ is a set of weights $w_1, ..., w_m$ such that $$\forall k \in \{1,...,n\} \; \sum_{i=1}^m H_{k,i}w_i \le 1.$$
\end{definition}

\begin{definition}[Fractional vertex cover]
A \emph{fractional vertex cover} in a hypergraph represented by an incidence matrix $H_{n,m}$ is a set of weights $w_1, ..., w_n$ such that $$\forall k \in \{1,...,m\} \; \sum_{i=1}^n H_{i,k}w_i \ge 1.$$
\end{definition}

\begin{definition}[Fractional set cover]
A \emph{fractional set cover} in a hypergraph represented by an incidence matrix $H_{n,m}$ is a set of weights $w_1, ..., w_m$ such that $$\forall k \in \{1,...,n\} \; \sum_{i=1}^m H_{k,i}w_i \ge 1.$$
\end{definition}

The definitions of graph numbers also extend naturally to \emph{fractional numbers}, which mean simply the upper or lower bounds on the sum of weights from the definitions.

By now it should be easy to notice that set cover and vertex cover, as well as their fractional variants, are symmetrical by the property of hypergraph dualism: a set cover in a hypergraph is a vertex cover in its dual and vice versa, so in particular the (fractional) set cover number of one is equal to the (fractional) vertex cover number of another. The following theorem, proven by the means of linear programming duality, gives another important connection between these concepts \cite{Matching}:

\begin{theorem}[Füredi, 1981]\label{th:furedi}
    The fractional matching number of any hypergraph is equal to its fractional vertex cover number.
\end{theorem}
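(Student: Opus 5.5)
The plan is to write both quantities as optimal values of a pair of linear programs that are dual to each other, and then invoke the LP duality theorem. We work with a finite hypergraph with incidence matrix $H=H_{n,m}$. Vertices are indexed by rows $k\in\{1,\dots,n\}$ and hyperedges by columns $i\in\{1,\dots,m\}$. Write $w\in\br^m$ for hyperedge weights and $v\in\br^n$ for vertex weights. The fractional matching number is
\[
\nu^*=\max\Bigl\{\textstyle\sum_{i=1}^m w_i : Hw\le \mathbf 1,\ w\ge 0\Bigr\}.
\]
The fractional vertex cover number is
\[
\tau^*=\min\Bigl\{\textstyle\sum_{k=1}^n v_k : H^T v\ge \mathbf 1,\ v\ge 0\Bigr\}.
\]
The definitions earlier in the paper ask for weights in $[0,1]$. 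First I will check that this bound costs nothing. In the matching program, if some hyperedge $e_i$ is nonempty, then $w_i\le 1$ already follows from $Hw\le\mathbf 1$. In the cover program, capping each $v_k$ at $1$ preserves feasibility and can only decrease the objective. So both numbers are exactly the optimal values of the two programs above.

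Next I will note that these two programs form a standard primal--dual pair: $\max\{c^Tw: Aw\le b, w\ge 0\}$ versus $\min\{b^Tv: A^Tv\ge c, v\ge0\}$, with $A=H$, $b=\mathbf 1_n$ and $c=\mathbf 1_m$.

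Weak duality, $\nu^*\le\tau^*$, I will prove directly by a double-counting chain:
\[
\sum_i w_i\le \sum_i w_i (H^Tv)_i = v^T H w\le \sum_k v_k .
\]

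For the reverse inequality I will apply the strong LP duality theorem. This requires both programs to be feasible with finite optimum. Feasibility is immediate: $w=0$ is feasible for the matching program, and $v=\mathbf 1$ is feasible for the cover program provided every hyperedge is nonempty. With both feasible, strong duality gives $\nu^*=\tau^*$.

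The main obstacle is the degenerate case of an empty hyperedge. Such an edge imposes the impossible constraint $0\ge 1$ on covers, while its matching weight is unconstrained. So I will adopt the standard convention, consistent with the paper's families of nonempty sets, that hyperedges are nonempty. Under that convention both optima exist, since the feasible regions are a nonempty polytope and a nonempty polyhedron bounded below, and strong duality applies.

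For infinite hypergraphs, which is the setting relevant to families $\cf$ on $\bn$, I would restrict the theorem to finite hypergraphs as in \cite{Matching}. Any extension to the infinite case would need a separate compactness argument, which I do not plan to claim here.
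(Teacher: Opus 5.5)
Your proof is correct, and it follows the route the paper points to. The paper does not reprove Füredi's theorem: it cites \cite{Matching} as a consequence of linear programming duality, and its later proof that $\mu^*(\cs,\cp)=\nu^*(\cs,\cp)$ is exactly your primal--dual pairing ($A$ versus $A^T$ with all-ones vectors), applied to the dual hypergraph; your checks on the $[0,1]$ weight bound, nonempty hyperedges and finiteness make explicit details the paper leaves implicit.
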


A careful reader may have taken note that a fourth concept can be defined, complementary to the previous three and forming with them a $2 \times 2$ square of symmetries. While this fourth problem -- named the \emph{vertex packing}, \emph{independent set}, or \emph{anticlique} -- is well known for regular graphs, it does not seem to have attracted as much attention with regard to hypergraphs as the previous three:

\begin{definition}[Anticlique]
An \emph{anticlique} in a hypergraph is a set of vertices such that no two of them have a hyperedge that would connect them (contain both of them).
\end{definition}

However, it does have noted connections to central topics in modern mathematics like Szemerédi's theorem on arithmetic progressions \cite{Anticlique}. It is much harder to find any mention of its fractional variant, which completes the aforementioned symmetry and happens to be useful for our classification of combinatorial Banach spaces:

\begin{definition}[Fractional anticlique]
A \emph{fractional anticlique} in a hypergraph represented by an incidence matrix $H_{n,m}$ is a set of weights $w_1, ..., w_n$ such that $$\forall k \in \{1,...,m\} \; \sum_{i=1}^n H_{i,k}w_i \le 1.$$
\end{definition}

\begin{theorem}
For any nonempty set $A$ and family of its nonempty subsets $\mathcal{E}(A)$, let us denote by $u(A, \mathcal{E}(A))$ the supremum on values of $\alpha$ such that $\mathcal{E}(A)$ covers at least $\alpha$ of $A$ by weight as given in Definition \ref{def:cov:by:weight}, and by $\mu^*(A, \mathcal{E}(A))$ the fractional anticlique number of the hypergraph created by mapping the elements of $A$ to vertices and the elements of $\mathcal{E}(A)$ to hyperedges. When, as we regularly assume, the union over the family $\mathcal{E}(A)$ covers the whole $A$ (and thus $u(A, \mathcal{E}(A))>0$), there occurs that $u(A, \mathcal{E}(A)) \cdot \mu^*(A, \mathcal{E}(A))=1$.
\end{theorem}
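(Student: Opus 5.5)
The plan is to rewrite both quantities as optimisation problems over nonnegative weights on $A$ and observe that they are reciprocal after normalising. Since only $|x_i|$ enters Definition \ref{def:cov:by:weight}, we may assume $x\ge 0$, and the zero sequence imposes no condition. For $x\ge 0$ on $A$ write $x(B)=\sum_{i\in B}x_i$ and $m(x)=\sup_{E\in\mathcal{E}(A)}x(E)$.

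Then $\mathcal{E}(A)\in W_\alpha(A)$ says that for every $x\ge0$ some $E$ has $x(E)\ge\alpha\,x(A)$. The value of $m(x)$ is attained when $A$ is finite, which is the case the paper uses; for infinite $A$ the same argument goes through with suprema and infima and an extra $\varepsilon$, without needing attainment. By homogeneity we get
\[
u(A,\mathcal{E}(A))=\inf\{m(x): x\ge 0,\ x(A)=1\}.
\]
The set of admissible $\alpha$ is a down-set, and it contains this infimum itself, so the supremum in the definition of $u$ equals the infimum.

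On the other side, by the definition of fractional anticlique,
\[
\mu^*(A,\mathcal{E}(A))=\sup\{w(A): w\ge0,\ w(E)\le 1 \text{ for all } E\in\mathcal{E}(A)\}.
\]

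Before comparing the two, check that neither quantity degenerates. Because $\bigcup\mathcal{E}(A)=A$, every $i\in A$ lies in some $E$, so $m(x)\ge\max_i x_i\ge 1/|A|>0$ on the simplex, which gives $u>0$. Also $\mu^*\ge 1$, since any single vertex of weight $1$ is feasible; in particular $\mu^*>0$.

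The inequality $u\mu^*\le 1$ comes from normalising an anticlique. Take a feasible $w$ with $s=w(A)>0$ and set $x=w/s$. Then $x(A)=1$ and $m(x)\le 1/s$, so $u\le 1/s$. Taking the supremum over $w$ yields $u\le 1/\mu^*$.

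The inequality $u\mu^*\ge 1$ comes from the reverse normalisation. Take $x\ge0$ with $x(A)=1$; then $m(x)>0$ as shown above. The weight $w=x/m(x)$ satisfies $w(E)\le 1$ for all $E$, so it is a fractional anticlique, and $w(A)=1/m(x)$. Hence $\mu^*\ge 1/m(x)$ for every such $x$, i.e.\ $\mu^*\ge 1/u$.

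Combining the two inequalities gives $u\mu^*=1$. The only real care needed is in the first reformulation, namely identifying the supremum over $\alpha$ with the infimum of the normalised maximum and excluding the degenerate values $0$ and $\infty$ through the covering assumption. Everything else is the scaling correspondence $w\leftrightarrow x$, which could be viewed as an instance of LP duality in the spirit of Theorem \ref{th:furedi}, but here it is elementary.
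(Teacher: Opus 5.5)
Your proof is correct and follows essentially the paper's route: both rest on proportionally rescaling between weight vectors of total mass $1$ and fractional anticliques. You make explicit the two inequalities, and the identification of $u$ with $\inf\{m(x)\}$, that the paper compresses into its ``take $\beta$ below one of these bounds'' step. One caveat concerns your aside on infinite $A$: the rescaling still yields $u=1/\mu^*$ there, but $u>0$ can fail (e.g.\ $\mathcal{E}(A)$ consisting of all singletons of an infinite $A$ gives $u=0$, $\mu^*=\infty$), so the positivity you derive via $1/|A|$ genuinely needs $A$ finite, as the paper assumes.
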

\begin{proof}
The notion that that $\mathcal{E}(A)$ covers at least $\alpha$ of $A$ by weight means the following: allocating weights summing to $1$ to the elements of $A$ guarantees that at least one subset from $\mathcal{E}(A)$ has weight greater or equal to $\alpha$. By proportionality, it happens if and only if allocating weights summing to $\frac{1}{\alpha}$ to the elements of $A$ guarantees that at least one subset from $\mathcal{E}(A)$ will be weighed to at least~$1$. 

Therefore $\frac{1}{u(A, \mathcal{E}(A))}$ is the lower bound on the sums of weights having this last property. The fractional anticlique number $\mu^*(A, \mathcal{E}(A))$, on the other hand, is by definition the upper bound on the sums of weights where all subsets from $\mathcal{E}(A)$ are weighed to at most $1$. Yet let us take a number $\beta$ which is below one of these two bounds. Then there exists a configuration of weights assigned to the elements of $A$ and summing to $\beta$ such that each one of the subsets in the family $\mathcal{E}(A)$ is weighed at strictly less than $1$, which implies that $\beta$ is also below the other bound. This completes the proof.
\end{proof}

\begin{theorem}
The fractional anticlique number of any hypergraph is equal to its fractional set cover number.
\end{theorem}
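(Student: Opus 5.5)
The plan is to prove the equality by linear programming duality, in the same way Theorem \ref{th:furedi} is proved. We place it in the square of symmetries described above: fractional anticlique is the dual packing problem to fractional set cover. Let $H$ be the $n\times m$ incidence matrix of a hypergraph whose hyperedges cover all vertices; this is the standing assumption, since otherwise both numbers are infinite. The fractional anticlique number is the optimum of
\[
(P)\qquad \max \sum_{i=1}^n w_i \quad\text{subject to}\quad H^T w\le \mathbf{1},\ w\ge 0 ,
\]
and the fractional set cover number is the optimum of
\[
(D)\qquad \min \sum_{j=1}^m y_j \quad\text{subject to}\quad H y\ge \mathbf{1},\ y\ge 0 .
\]
These two programs are exactly a primal--dual pair in standard form: $\max c^Tw$ with $Aw\le b$, $w\ge0$, against $\min b^Ty$ with $A^Ty\ge c$, $y\ge0$, where $A=H^T$, $b=\mathbf{1}_m$ and $c=\mathbf{1}_n$.

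Next, I would note that the definitions give weights in $[0,1]$, while the programs above only require nonnegativity. This causes no loss. For $(P)$, a feasible $w$ automatically satisfies $w_i\le 1$, because every vertex lies in some hyperedge $e_k$ and $\sum_i H_{i,k}w_i\le1$. For $(D)$, replacing each $y_j$ by $\min\{y_j,1\}$ keeps feasibility, since a covering constraint already met by a single weight $\ge1$ stays met, and it does not increase the objective.

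Then I would check that both programs are feasible and that both optima are attained. For $(P)$, $w=0$ is feasible and the feasible region is bounded, since $w_i\le1$. For $(D)$, $y=\mathbf{1}$ is feasible because the hyperedges cover all vertices. Weak duality is a one-line computation:
\[
\mathbf{1}^Tw\le (Hy)^Tw = y^T(H^Tw)\le \mathbf{1}^Ty .
\]
It shows the anticlique number is at most the set cover number. The strong duality theorem of linear programming, applicable since both programs are feasible, gives equality.

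The only real obstacle is bookkeeping: matching the paper's indexing of $H$ in the two fractional definitions so that the transpose lands correctly, and handling the degenerate case of an uncovered vertex. In that case $(D)$ is infeasible and $(P)$ is unbounded, so both numbers equal $+\infty$ and the equality still holds by convention. If one wants to avoid citing strong duality, an alternative route is through Theorem \ref{th:furedi} applied to the dual hypergraph with matrix $H^T$. Its fractional matching constraints $\sum_i H_{i,k} w_i\le 1$ are exactly the anticlique constraints of $H$. Its fractional vertex cover constraints $\sum_j H_{k,j} y_j\ge1$ are exactly the set cover constraints of $H$. Hence the statement is Füredi's theorem read on the dual, and this is the version I would present.
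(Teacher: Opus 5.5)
Your proposal is correct and is essentially the paper's argument: the paper proves this theorem by applying Theorem~\ref{th:furedi} to the dual hypergraph (the anticlique number of $H$ is the fractional matching number of $H^T$, and the set cover number of $H$ is the fractional vertex cover number of $H^T$), and your LP formulation with $A=H^T$, $b=\mathbf{1}_m$, $c=\mathbf{1}_n$ is the same strong-duality argument the paper gives afterwards for $\mu^*(\cs,\cp)=\nu^*(\cs,\cp)$. One small caveat concerns your aside that both numbers are $+\infty$ when some vertex is uncovered: that only holds if weights are merely nonnegative, whereas with the paper's $[0,1]$ weights the anticlique number stays finite there while set cover is infeasible, so the covering assumption is genuinely needed (for the F\"uredi route too, since the dual would then have an empty hyperedge).
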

\begin{proof}
As we already know, the fractional set cover number of a hypergraph is equal to the fractional vertex cover number of its dual, the fractional anticlique number of a hypergraph is equal to the fractional matching number of its dual, and the equality of those two follows from Theorem \ref{th:furedi} (applied to that dual).
\end{proof}

In this way we have finally managed to rewrite the definition of covering by weight in well-known terms: any set $A$ is covered by weight by a family of its subsets $\mathcal{E}(A)$ to the level of $\frac{1}{\nu^*(A, \mathcal{E}(A))}$, where $\nu^*$ is traditionally used to denote the fractional set cover number. This is the effect we sought -- demonstrating the usefulness of describing the inner structure of combinatorial Banach spaces by means of hypergraph theory. For additional benefit of the reader, we will also show this proof as yielded directly by linear programming methods:


Let us first repeat and slightly rewrite the formulations of our two problems, of which one is closely related to the concept of {\em cover by weight} appearing in Definition \ref{def:cov:by:weight} and the other is very well known in combinatorics.

Both notions will be defined for a finite set $\cs$ such that $|\cs|=N\in\bn$, and the family $\cp=\{P_1,...,P_M\}$ of subsets of $\cs$ covering $\cs$, i.e. such that $\cs = \bigcup_{P\in\cp} P$. 

\begin{definition}
The Cover by Weight problem for a finite set $\cs=\{s_1,...,s_N\}$ and a family $\cp=\{P_1,...,P_M\}$ covering $\cs$ is the problem of finding such value $\alpha\in\br$ that for any sequence of nonnegative weights $(x_1,...,x_N)$ of elements of $\cs$ such that $x_1+...+x_N=\alpha$ there exists at least one subset $P\in\cp$ containing elements of total weight at least $1$, i.e. $\sum_{j\in P}x_j\geq 1$. We will denote the infimum of all such values of $\alpha$ by $\mu^*(\cs,\cp)$.  
\end{definition}

\begin{definition}
The Fractional Set Cover problem for a finite set $\cs=\{s_1,...,s_N\}$ and a family $\cp=\{P_1,...,P_M\}$ covering $\cs$ is the
problem of finding such value $\beta\in\br$ that there exists a sequence of nonnegative weights $(y_1,...,y_M)$ of subsets $P\in\cp$, such that for each element $s_i\in\cs$ the sum of weights of sets $P$ containing $s_i$ is at least $1$. We will denote the infimum of all such values of $\beta$ by $\nu^*(\cs,\cp)$. 
\end{definition}

It appears that the values $\mu^*(\cs,\cp)$ and $\nu^*(\cs,\cp)$ may be expressed as linear programming problems.

\begin{proposition}\label{prop:lp:problems}
The following equalities hold:
\begin{itemize}
    \item[(i)] $\mu^*(\cs,\cp) = \max\{ \sum_{i=1}^N x_i : \forall_{P\in \cp} \sum_{j\in\ P} x_j \leq 1 \wedge \forall_{j\in\{1,...,N\}} x_j\geq 0 \} = $
    \[
    \max\{ \sum_{i=1}^N x_i : \forall_{P\in \cp} \sum_{j=1}^N x_j\chfun_P(j) \leq 1 \wedge \forall_{j\in\{1,...,N\}} x_j\geq 0 \};
    \]
    \item[(ii)] $\nu^*(\cs,\cp) = \min\{ \sum_{j=1}^M y_j : \forall_{s\in\cs} \sum_{j : s\in P_j} y_j \geq 1 \wedge \forall_{j\in\{1,...,N\}} y_j\geq 0\}=$ 
    \[
    \min\{ \sum_{j=1}^M y_j : \forall_{s\in\cs} \sum_{j=1}^M y_j\chfun_{P_j}(s) \geq 1 \wedge \forall_{j\in\{1,...,N\}} y_j\geq 0\}.
    \]
\end{itemize}
\end{proposition}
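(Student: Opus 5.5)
Part (ii) is essentially a restatement of the definition of $\nu^*(\cs,\cp)$; the substance lies in part (i), i.e.\ identifying the infimum in the Cover by Weight problem with the maximum of the packing linear program. In both parts we must also check that the optimum is attained, so that ``max'' and ``min'' (not just sup and inf) are justified.

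For (ii), the condition ``for each $s\in\cs$ the sum of weights of sets $P_j$ containing $s$ is at least $1$'' is literally $\sum_{j=1}^M y_j\chfun_{P_j}(s)\ge 1$. So $\nu^*$ is the infimum of the linear functional $\sum_j y_j$ over this polyhedron. The polyhedron is nonempty, since $\cp$ covers $\cs$ and $y\equiv 1$ is feasible. The objective is bounded below by $0$. Hence the infimum is attained: either by the fundamental theorem of linear programming, or by restricting to the compact set $\{y\ge 0,\ \sum y_j\le M\}$, which contains a feasible point.

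For (i), denote by $m$ the maximum on the right-hand side. The feasible set $\{x\ge0:\sum_{j\in P}x_j\le1\ \forall P\in\cp\}$ is compact, because every $s_j$ lies in some $P$, giving $x_j\le 1$. It is also nonempty, since it contains $0$. So the maximum exists and $m<\infty$. I then prove $\mu^*=m$ by two inequalities, using the scaling argument already employed in the previous theorem:

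\begin{itemize}
\item[(a)] Let $\alpha<m$ be admissible. Take an optimal $x^*$ with $\sum x^*_i=m$ and scale it to $x=(\alpha/m)x^*$. Then $\sum x_i=\alpha$ and every $P$ carries weight at most $\alpha/m<1$, so $\alpha$ fails the Cover by Weight property. Hence every admissible $\alpha$ satisfies $\alpha\ge m$, i.e.\ $\mu^*\ge m$.
\item[(b)] Let $\alpha>m$ and let $x\ge0$ with $\sum x_i=\alpha$. If every $P$ had $\sum_{j\in P}x_j<1$ (in fact $\le 1$ suffices), then $x$ would be feasible for the LP with value $\alpha>m$, a contradiction. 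So some $P$ has weight at least $1$, and every $\alpha>m$ is admissible. Hence $\mu^*\le m$.
\end{itemize}

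The main subtlety is the boundary value $\alpha=m$ itself, together with the fact that the definition of $\mu^*$ quantifies over weights summing exactly to $\alpha$. At $\alpha=m$, any $x$ with sum $m$ either violates some constraint, or is feasible and optimal; in the latter case some constraint must be tight, since otherwise scaling up would improve the objective. Either way some $P$ has weight $\ge1$, so $m$ itself is admissible and the infimum is attained. In any case, (a) and (b) already give $\mu^*=m$ as an infimum.
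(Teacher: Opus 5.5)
Your proof is correct. The paper omits this proof as ``obvious'', so your argument (compactness or boundedness of the feasible sets, plus the same rescaling idea the paper uses for $u\cdot\mu^*=1$) is exactly the intended verification. The only thing to make explicit is that step (a) requires $\alpha\ge 0$, and $m\ge 1$ as witnessed by $x=(1,0,\dots,0)$, so that $(\alpha/m)x^*$ is a genuine nonnegative weight vector; for $\alpha<0$ the defining condition of $\mu^*$ holds vacuously, so the definition has to be read with $\alpha\ge 0$, as you implicitly do.
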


The proof is obvious and will be omitted.

The optimization problems given in Proposition \ref{prop:lp:problems} may be treated as linear programming problems with
\begin{itemize}
    \item[($\mu^*$)] maximize $c^T x$ subject to $Ax\leq b$ and $x\geq 0$, $x\in\br^N$, where $c=[1,\ldots,1]^T\in\br^N$, $b=[1,\ldots,1]^T\in\br^M$ and $A$ is $M\times N$ matrix $A=[a_{i,j}]$, where $a_{i,j}=\chfun_{P_i}(j)$.
    \item[($\nu^*$)] minimize $\tilde c^T y$ subject to $\tilde A y \geq \tilde b$ and $y\geq 0$, $y\in\br^M$, where  $\tilde c=[1,\ldots,1]^T\in\br^M$, $\tilde b=[1,\ldots,1]^T\in\br^N$ and $\tilde A$ is $N\times M$ matrix $\tilde A=[\tilde a_{i,j}]$, where $\tilde a_{i,j}=\chfun_{P_j}(i)$.
\end{itemize}

\begin{proposition}
     \[
     \mu^*(\cs,\cp)=\nu^*(\cs,\cp).
     \]
\end{proposition}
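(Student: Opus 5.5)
The plan is to recognise the two programs in Proposition \ref{prop:lp:problems} as a primal--dual pair and then invoke the strong duality theorem of linear programming. Write ($\mu^*$) in the standard form: maximize $c^T x$ subject to $Ax\le b$, $x\ge 0$, where $A=[\chfun_{P_i}(j)]$ is the $M\times N$ matrix and $b=\mathbf 1_M$, $c=\mathbf 1_N$. Its dual in standard form is: minimize $b^T y$ subject to $A^T y\ge c$, $y\ge 0$, $y\in\br^M$. The first step is to check that this dual is exactly ($\nu^*$). Indeed $A^T$ is the $N\times M$ matrix with entries $\chfun_{P_j}(i)$, which is $\tilde A$. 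Moreover $b=\tilde c=\mathbf 1_M$ and $c=\tilde b=\mathbf 1_N$. So the constraint $A^Ty\ge c$ says precisely that every $s_i\in\cs$ has total weight $\sum_{j:s_i\in P_j}y_j\ge 1$.

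The second step is to verify the hypotheses of strong duality, namely that both programs are feasible. The primal is feasible with $x=0$. The dual is feasible with $y=\mathbf 1_M$, since $\cp$ covers $\cs$, so every element lies in some $P_j$ and receives weight at least $1$. In addition, the primal is bounded. Each $s_i$ lies in some $P$, so $0\le x_i\le\sum_{j\in P}x_j\le 1$, which gives $\sum_i x_i\le N$. This also shows that the maximum in (i) is attained, because the feasible region is a nonempty compact polytope. Strong duality then gives that both optima are finite, both are attained, and they coincide.

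Weak duality alone is one line: $\sum_i x_i = c^Tx\le (A^Ty)^Tx = y^T(Ax)\le y^T b=\sum_j y_j$. This shows $\mu^*(\cs,\cp)\le\nu^*(\cs,\cp)$ without any machinery, and I will record it explicitly.

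The only delicate point, and the main obstacle, is making sure the quantities $\mu^*,\nu^*$ as originally defined (as infima of admissible $\alpha$, $\beta$) really coincide with the LP values. That is the content of Proposition \ref{prop:lp:problems}, which I will take as given. If one wanted to be careful, the $\mu^*$ identification goes as follows. A value $\alpha$ is admissible iff no nonnegative $x$ with sum $\alpha$ has all $\sum_{j\in P}x_j<1$. By scaling, this holds iff $\alpha$ is at least the LP maximum. The boundary case, where the maximum is attained with equality constraints, is handled by the $\ge 1$ in the definition. With that accepted, the proposition is just LP strong duality. Alternatively, one can observe that it coincides with the previous theorem combined with Theorem \ref{th:furedi}.
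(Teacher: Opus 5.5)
Your proposal is correct and matches the paper's proof: you identify ($\nu^*$) as the LP dual of ($\mu^*$) via $\tilde A=A^T$, $\tilde b=c$, $\tilde c=b$, and then apply strong duality. Your explicit check that both programs are feasible ($x=0$, and $y=\mathbf{1}_M$ because $\cp$ covers $\cs$) supplies the strong-duality hypothesis that the paper leaves implicit.
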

\begin{proof}As we can see $\tilde c=b$, $\tilde b = c$ and $\tilde A=A^T$. Hence the linear programming problem $\mu^*$ is dual to the linear programming problem $\nu^*$. This means that, by the strong duality theorem (cf. \cite[Section 6]{LinearProgramming}), there is $\max c^T x = \min\tilde c^T y$ what completes the proof. 
\end{proof}

This is a meaningful result because it shows that the conditions of compactness in combinatorial Banach spaces are directly connected with the much more recognizable topic of set cover problem. It is one of twenty-one Karp's NP-complete problems and, together with its fractional variant, played a central role in modern research on approximation algorithms \cite{Approx}.

\section*{Acknowledgement}
The work of Jacek Gulgowski was supported by the project "Center for Trustworthy Artificial Intelligence for Life Sciences" implemented under the International Research Agendas programme of the Foundation for Polish Science co-financed by the European Union under the European Funds for Smart Economy 2021-2027 (FENG).    

Both authors are grateful to Anna Pelczar-Barwacz and Piotr Borodulin-Nadzieja for inspiring discussions.

\bibliographystyle{amsplain}
\bibliography{references.bib}

\end{document}